\documentclass[11pt]{article}
\usepackage{mathrsfs}
\usepackage{latexsym,lineno}
\usepackage{epsfig}
\usepackage{color}
\usepackage{amsmath}\usepackage{fleqn}\usepackage{verbatim}\usepackage{epsf}
\usepackage{amsthm}\usepackage{graphicx, float}\usepackage{graphicx}
\usepackage{amsfonts}\usepackage{amssymb}\usepackage{graphpap}
\usepackage{epic}\usepackage{curves}

\topmargin 0in \setlength{\oddsidemargin}{0.in} \textwidth=6.5in
\textheight=8.5in \evensidemargin=0in \oddsidemargin=0.05in
\topmargin=0in \topskip=0pt \baselineskip=12pt
\parskip=2pt
\parindent=1em

\newcommand{\be}{\begin{equation}}
\newcommand{\ee}{\end{equation}}
\newcommand{\benum}{\begin{enumerate}}
\newcommand{\eenum}{\end{enumerate}}
\newcommand{\bit}{\begin{itemize}}
\newcommand{\eit}{\end{itemize}}
\newtheorem{thom}{Theorem}[section]
\newtheorem{lemma}{Lemma}[section]

\newtheorem{prop}{Proposition}[section]

\newtheorem{coro}{Corollary}

\topmargin 0pt \textheight 23cm \textwidth 17 cm \oddsidemargin 0pt
\evensidemargin 0pt
\usepackage{graphicx}

\begin{document}
\def\s{\subseteq}
\def\n{\noindent}
\def\se{\setminus}
\def\dia{\diamondsuit}
\def\la{\langle}
\def\ra{\rangle}


\title{Sharp upper bounds for  multiplicative Zagreb indices of bipartite graphs with  given diameter}

\author{Chunxiang Wang$^a$, ~ Jia-Bao Liu$^b$, ~Shaohui Wang$^{c,}$\footnote{  Authors'email address: C. Wang(e-mail: wcxiang@mail.ccnu.edu.cn),
J.B. Liu (e-mail:  liujiabaoad@163.com),
S. Wang (e-mail: shaohuiwang@yahoo.com).}\\
\small\emph {a. School of Mathematics and Statistics, Central China Normal University, Wuhan,
430079, P.R. China}\\
\small\emph {b. School of Mathematics and Physics, Anhui Jianzhu
University, Hefei 230601, P.R. China} \\
\small\emph {c.  Department of Mathematics and Computer Science, Adelphi University, Garden City, NY 11550, USA} }
\date{}
\maketitle

\begin{abstract}

The first multiplicative Zagreb index of  a graph $G$ is the product of the square of every vertex  degree, while the second multiplicative Zagreb index is the product of the degree of each edge over all edges.   In our work,  we explore  the multiplicative Zagreb indices of  bipartite graphs of order $n$ with diameter $d$, and sharp upper bounds are obtained for these indices of graphs in $\mathcal{B}(n,d)$, where $\mathcal{B}(n, d)$ is the set of all  $n$-vertex bipartite graphs with the diameter $d$. In addition, we explore the relationship between the maximal  multiplicative Zagreb indices of graphs \textcolor{blue}{within} $\mathcal{B}(n, d)$.  As   consequences, those bipartite graphs with the largest, second-largest and smallest  multiplicative  Zagreb indices are characterized, and our results extend and enrich  some known conclusions.
\\
Accepted by Discrete Applied Mathematics.

\vskip 2mm \noindent {\bf Keywords:}   Bipartite graphs; Diameter;   Multiplicative Zagreb  indices. \\
{\bf AMS subject classification:} 05C12, 05C35 
\end{abstract}

\section{Introduction}

In the interdisplinary area between chemistry  and   mathematics, molecular graph invariants or descriptors could be used in the study of quantitative structure-property relationships (QSPR) and quantitative structure-activity relationships (QSAR). It would be helpful for describing partially  biological and chemical
properties, including physico-chemical (boiling and melting points)  and biological properties(toxicity)~\cite{Gutman1996,0103,LiuPX2015}. Among the most significant molecular descriptors,  the classically   molecular invariant
is named as Zagreb indices~\cite{Gutman1972}, which are expressed as  expected formulas for the total
$\pi$-electron energy of conjugated molecules as follows.
\begin{eqnarray} \nonumber
M_1(G) = \sum_{u \in V(G)} d(u)^2,
~~
~ M_2(G) = \sum_{uv \in E(G)} d(u)d(v),
\end{eqnarray}
where  $G$ is a (molecular) graph, $uv$ is a bond between two atoms $u$ and $v$, and $d(u)$ (or $d(v)$, respectively) is the number of atoms that are connected with $u$ (or $v$, respectively).

Many researchers are attracted by the idea of finding bounds for graph invariants and the related problem of figuring out the graphs achieving
  the maximum and minimum values of corresponding indices~\cite{L2016,L2015,0101,0102}. Nowadays, there are lots of articles
related to Zagreb indices in the interdisplinary area between  chemistry and
mathematics~\cite{Li2008,shi2015,BF2014,SM2014,WangJ2015,Iranmanesh20102,Ramin2016,Bojana2015}.  For instance,
Borovi\'canin et al.~\cite{Borov2016} introduced
bounds on Zagreb indices of trees in terms of domination number
and extremal trees are characterized.   Guo et al. \cite{d01} provided  the values of $M_1(G)$  of bipartite graphs. Cheng et al. \cite{d02,d03} studied the upper and lower bounds for the first Zagreb index
with a given number of vertices and edges.
Considering the  successful applications on Zagreb indices~\cite{Gutman2014}, Todeschini et al.(2010)~\cite{RT20102,Wang2015} presented the
following multiplicative variants of molecular structure
descriptors:
\begin{eqnarray} \nonumber
 \prod_1(G) = \prod_{u \in V(G)} d(u)^2,  ~
~~
 \prod_2(G) = \prod_{uv \in E(G)} d(u)d(v) = \prod_{u \in V(G)}
d(u)^{d(u)}.
\end{eqnarray}

Recently, many researchers are {interested} in (mutiplicative) Zagreb indices of complicate graphs.
 Xu and Hua~\cite{Xu20102} provided a unified approach to
characterize extremal (maximal and minimal) trees, unicyclic
graphs and bicyclic graphs with respect to multiplicative Zagreb
indices, respectively. Liu and Zhang \cite{Liuz20102} investigated several sharp upper bounds for
$\prod_1$-index and $\prod_2$-index in terms of graph parameters
such as the order, size and radius~\cite{Liuz20102}. Wang and Wei~\cite{Wang2015} gave sharp upper and lower bounds of multiplicative Zagreb indices for
$k$-trees.  Feng et al. \cite{d04} determined the graphs with given diameter for Zagreb indices. Li and Zhang \cite{d05} gave the upper bounds for Zagreb indices of bipartite graphs with a given diameter.

Motivated by  above statements, in this paper we further
investigate sharp upper bounds  for  the multiplicative Zagreb indices  of graphs in $\mathcal{B}(n,d)$, which is the set of $n$-vertex bipartite graphs with
diameter $d$.  In addition, we explore the relationship between the maximal  multiplicative Zagreb indices of graphs among $\mathcal{B}(n, d)$.  As  consequences, these bipartite graphs with the largest, second-largest and smallest  mutiplicative  Zagreb indices are characterized.

\section{Preliminary}
Let $G$ be a  simple, connected and undirected  graph. Denote a graph by $G = (V, E)$ with the vertex set $V = V (G)$  and the edge set $E = E(G)$.
   $|G|$ is defined as the cardinality of $V(G)$. For a vertex $v \in V(G)$, the neighborhood of  $v $ is the set $N(v) = N_G(v) = \{w \in V(G), vw \in E(G)\}$, and $d_G(v)$ (or briefly $d(v)$)  denotes the degree of $v$ with $ d_{G}(v) = |N(v)|$.
For $S \subseteq V(G)$ and  $F \subseteq E(G)$,   we use $G[S]$ for the subgraph of $G$ induced by $S$, $G - S$ for the subgraph induced by $V(G) - S$ and $G - F$ for the subgraph of G obtained by deleting $F$.
 Let $\mathcal{B}(n, d)$ be the set of $n$-vertex bipartite graphs with diameter $d$.  We denote $\lfloor x\rfloor$  by the largest integer not greater than $x$ and $\lceil x \rceil$ by the smallest integer not less than $x$, where  $x$ is a real number.

Note that  if $d=1$,  the unique bipartite {graph in $\mathcal{B}_{n,d}$} is  $K_2$. So we  should assume that $d \geq 2$ in the whole work.  Clearly, there exists a partition $V_0, V_1, \cdots, V_d$ of $V(G)$ such that $V_0 = \{u\}$ and $V_i = \{v: d(u, v) = i,  v \in V(G)\}$ with $1 \leq i \leq d$. We {call} this partition  a $d$-partition.
Also, each $V_i$ ($0\leq i \leq d$) is called a partition set. Let $|V_j|=m_j$, $1\leq j\leq d$.
By  routine calculations, one can derive the following propositions.
\begin{prop}
Let $f(x) = \frac{x}{x+m}$ be a function with $m > 0$. Then $f(x)$ is increasing in $\mathbb{R}$.
\end{prop}

\begin{prop}
Let $g(x) = \frac{x^x}{(x+m)^{x+m}}$ be a function with $m > 0$. Then $g(x)$ is decreasing  in $\mathbb{R}$.
\end{prop}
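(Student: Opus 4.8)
The plan is to reduce the monotonicity of $g$ to that of $\ln g$, which converts the quotient of two "$t^t$"-type expressions into a difference that differentiates in one line. On the natural domain $x>0$ (where $x^x:=e^{x\ln x}$ makes sense) we have $g(x)>0$, and since $\exp$ is strictly increasing, $g$ is decreasing if and only if $\ln g$ is decreasing.

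First I would write
\[
\ln g(x) = x\ln x - (x+m)\ln(x+m),
\]
and compute its derivative using $\frac{d}{dt}\bigl(t\ln t\bigr)=\ln t+1$:
\[
(\ln g)'(x) = \bigl(\ln x + 1\bigr) - \bigl(\ln(x+m)+1\bigr) = \ln x - \ln(x+m) = \ln\frac{x}{x+m}.
\]
Then I would invoke $m>0$: for every $x>0$ one has $0<\frac{x}{x+m}<1$ (this is exactly the regime where $\frac{x}{x+m}$ sits strictly below its limiting value $1$, cf.\ Proposition~2.1), hence $\ln\frac{x}{x+m}<0$. Thus $(\ln g)'(x)<0$ throughout, so $\ln g$, and therefore $g$, is strictly decreasing.

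The only point that needs care is the domain: $x^x$ is genuinely defined only for $x>0$, so the "$\mathbb{R}$" in the statement should be read as the positive reals — which is all the subsequent applications require, since $x$ there ranges over vertex degrees $m_j\ge 1$. Beyond this bookkeeping there is no real obstacle; the entire argument is the single derivative computation above together with the sign of a logarithm.
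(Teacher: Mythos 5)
Your proof is correct: the paper offers no argument for this proposition (it is dismissed as ``routine calculations''), and your logarithmic-derivative computation, giving $(\ln g)'(x)=\ln\frac{x}{x+m}<0$, is exactly the routine calculation intended. Your observation that the statement should read ``decreasing on $(0,\infty)$'' rather than ``in $\mathbb{R}$'' is a valid and worthwhile correction, and it suffices for every later use of the proposition, where the arguments are positive integers.
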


\section{The graphs achieving the sharp upper bounds of  mutiplicative Zagreb indices}

We first introduce several lemmas, which are critical to deduce the sharp upper bounds of  mutiplicative Zagreb indices.

\begin{lemma}\cite{Zhai2009} Let $G$ be a graph in $\mathcal{B}$$(n, d)$ with a $d$-partition of $V(G)$. Then the induced graph $G[V_i]$ is an empty graph (that is, $G[V_i]$ has no edges) for all $i$ with $0 \leq i \leq d$.
\end{lemma}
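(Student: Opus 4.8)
The plan is to derive the statement purely from the bipartiteness of $G$ together with the parity constraint that any shortest-path layering from a fixed root must satisfy. First I would invoke a bipartition $V(G) = X \cup Y$ (with every edge running between $X$ and $Y$), and assume without loss of generality that the root vertex $u$, for which $V_0 = \{u\}$, lies in $X$. The central fact is that every walk in a bipartite graph starting at $u$ alternates strictly between $X$ and $Y$; consequently a $u$--$v$ walk of length $\ell$ terminates in $X$ if $\ell$ is even and in $Y$ if $\ell$ is odd.

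Next I would apply this to a shortest $u$--$v$ path, which is in particular such a walk, of length $i = d(u,v)$. This shows that $V_i \subseteq X$ when $i$ is even and $V_i \subseteq Y$ when $i$ is odd, so each partition set $V_i$ lies entirely on one side of the bipartition. Since $G$ has no edge with both endpoints in $X$ and none with both endpoints in $Y$, the induced subgraph $G[V_i]$ --- whose vertices all lie on a single side --- can contain no edge at all; hence $G[V_i]$ is empty for every $i$ with $0 \le i \le d$. The case $i = 0$ is trivial because $|V_0| = 1$.

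I do not anticipate a genuine obstacle here: the only point deserving a word of care is that one should check the parity claim for a \emph{shortest} $u$--$v$ path rather than for an arbitrary one, but this is automatic, since any path is a walk and the side containing its endpoint is determined by the walk's length parity --- so in fact all $u$--$v$ paths have lengths of the same parity. Thus the lemma is an immediate structural consequence of bipartiteness, and the reference \cite{Zhai2009} serves only for attribution.
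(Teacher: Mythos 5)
Your argument is correct and complete: the parity of any $u$--$v$ walk in a bipartite graph forces each distance layer $V_i$ to lie entirely in one side of the bipartition, so $G[V_i]$ inherits no edges. The paper itself offers no proof of this lemma --- it is imported verbatim from \cite{Zhai2009} --- so there is nothing to compare against, but your bipartiteness/parity argument is the standard and essentially only natural proof of this fact, and it is airtight as written.
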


Based on Propositions 2.1 and 2.2, we will continue to consider the properties and structures of the partition $V_0, V_1, V_2, \cdots, V_d$.
Then the following lemmas are derived.

\begin{lemma} Let $G \in $ $ \mathcal{B}(n, d)$ with the maximal $\prod_1$-value.  Then the induced graph $G[V_{i-1} \cup V_i]$ is a complete bipartite subgraph for   all $i$ with $1 \leq i \leq d$.  Furthermore,  if $d \geq 3$ then $|V_d|=1$; If $d=2$, then $|V_1|=|V_2|=\frac{n-1}{2}$ with $n$ is odd and  $|V_1|=\frac{n}{2}$, $|V_2|=\frac{n}{2}-1$ with $n$ is even.
\end{lemma}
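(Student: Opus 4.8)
The plan is to prove the three claims in sequence, using an edge-addition/degree-shifting argument for the completeness statement and then the monotonicity of $g(x)=x^x/(x+m)^{x+m}$ (Proposition 2.2) for the structural claims on $|V_d|$ and on the sizes $|V_1|,|V_2|$. First I would establish that $G[V_{i-1}\cup V_i]$ is complete bipartite for every $i$. Suppose not: then there exist $u\in V_{i-1}$ and $v\in V_i$ with $uv\notin E(G)$. I would add the edge $uv$ to obtain $G'$. The key observations are that (i) adding $uv$ cannot decrease the distance between $u$ and the root of the $d$-partition below $i-1$ nor that of $v$ below $i$, and more generally that $G'$ is still bipartite with the same bipartition and every distance $d(u,w)$ is nonincreasing, so $\mathrm{diam}(G')\le d$; (ii) since $G$ was connected with diameter exactly $d$ and the partition sets $V_0,\dots,V_d$ are all nonempty, adding a single edge between consecutive levels does not destroy the existence of a pair at distance $d$ (one must check the vertices realizing the diameter are unaffected, or argue that if the diameter dropped one could still find $G''\in\mathcal B(n,d)$ with even larger $\prod_1$ by the next step); and (iii) $\prod_1(G')>\prod_1(G)$ because exactly two vertex degrees strictly increase and none decrease, so the product of squares of degrees strictly increases. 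This contradicts maximality of $\prod_1$, proving completeness of each $G[V_{i-1}\cup V_i]$. I expect the subtlety here to be (ii): ensuring the diameter stays exactly $d$; the cleanest route is to note that by Lemma 3.1 each $V_i$ is independent and, given completeness between consecutive levels, $V_0$ and $V_d$ are at distance exactly $d$, so after all such edge additions the graph is precisely the ``complete $d$-partite-type'' bipartite graph determined by the vector $(m_0,\dots,m_d)$, which indeed has diameter $d$.

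Next, assuming $d\ge 3$, I would show $|V_d|=1$. With all consecutive levels complete bipartite, every vertex in $V_d$ has degree exactly $m_{d-1}$, every vertex in $V_{d-1}$ has degree $m_{d-2}+m_d$, and the degrees in levels $0,\dots,d-2$ do not depend on $m_d$ except through $m_{d-1}$. Writing $\prod_1(G)$ as a function of $m_d$ with $n$ and the other $m_i$'s constrained by $\sum m_i=n$, I would move one vertex from $V_d$ to $V_{d-1}$ (legal as long as $m_d\ge 2$, and this preserves diameter $d$ since $V_0$ is still at distance $d$ from the nonempty $V_d$) and compare. Equivalently, one compares the factor $m_{d-1}^{\,m_d}\cdot(m_{d-2}+m_d)^{\,m_{d-1}}$ before and after; the relevant quotient reduces to an instance of $g$ in Proposition 2.2 (with the shift parameter $m=m_{d-2}$ or similar), whose monotonicity forces the optimum at $m_d=1$. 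Here I would use Proposition 2.1 as well to compare the $m_{d-2}^{\,\text{(something)}}$-type factors if needed. The main obstacle in this step is bookkeeping: isolating exactly which degrees change when a vertex is shifted between $V_{d-1}$ and $V_d$, and showing the net change has a definite sign via $g$ decreasing.

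Finally, for $d=2$ the graph is the complete bipartite graph $K_{m_1,m_2}$ together with the root $u$ adjacent to all of $V_1$; that is, $u$ has degree $m_1$, each vertex of $V_1$ has degree $1+m_2$ (adjacent to $u$ and all of $V_2$), wait—reccount: $u\in V_0$, $V_1=N(u)$, $V_2$ at distance $2$; completeness of $G[V_0\cup V_1]$ and $G[V_1\cup V_2]$ gives $d(u)=m_1$, $d(v)=1+m_2$ for $v\in V_1$, $d(w)=m_1$ for $w\in V_2$, with $1+m_1+m_2=n$. Then $\prod_1(G)=m_1^{2}\cdot(1+m_2)^{2m_1}\cdot m_1^{2m_2}=m_1^{2+2m_2}(1+m_2)^{2m_1}$ subject to $m_1+m_2=n-1$. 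I would optimize this single-variable expression: taking logarithms, $\tfrac12\log\prod_1=(1+m_2)\log m_1+m_1\log(1+m_2)$ with $m_1=n-1-m_2$, and analyze the discrete difference as $m_2$ increments, showing it is balanced (maximized) when $m_1$ and $m_2$ are as equal as possible, namely $m_1=m_2=\tfrac{n-1}{2}$ when $n$ is odd, and $m_1=\tfrac n2$, $m_2=\tfrac n2-1$ when $n$ is even. The expression is symmetric-like in $m_1\leftrightarrow 1+m_2$, which suggests setting $m_1=1+m_2$ (odd $n$) or $|m_1-(1+m_2)|=1$ (even $n$) is optimal; I would confirm this by the convexity/unimodality of $x\log y+y\log x$ on the line $x+y=\text{const}$. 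The anticipated difficulty is making the discrete comparison rigorous rather than just appealing to the continuous relaxation, but since the objective is log-concave along the constraint line, the integer maximizer is at the nearest integer point(s) to the symmetric point, giving exactly the stated values.
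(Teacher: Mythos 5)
Your treatment of the completeness claim and of the $d=2$ case is essentially the paper's own argument: the paper likewise obtains completeness of each $G[V_{i-1}\cup V_i]$ by adding missing edges (which only raises degrees), and for $d=2$ it compares the configurations $(s,t)$ and $(s+1,t-1)$ via the ratio $\frac{s^{2t+2}(t+1)^{2s}}{(s+1)^{2t}t^{2s+2}}$, which is the same single-variable optimization you describe with $x=m_1$, $y=1+m_2$. The gap is in your second step, the proof that $|V_d|=1$ when $d\ge 3$. You propose to relocate a vertex $x$ from $V_d$ to $V_{d-1}$ and to deduce $\prod_1(G')>\prod_1(G)$ from Proposition 2.2. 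This move changes more degrees than your bookkeeping records (every vertex of $V_{d-2}$ gains a neighbour, every vertex of $V_{d-1}$ loses one, every other vertex of $V_d$ gains one, and $x$ itself goes from degree $m_{d-1}$ to $m_{d-2}+m_d-1$), and the resulting comparison does not have a definite sign. Concretely, take $d=3$, $n=6$, level sizes $(1,1,2,2)$ with consecutive levels complete bipartite: the degree sequence is $(1,3,3,3,2,2)$, so $\prod_1(G)=11664$, while after your relocation the level sizes become $(1,1,3,1)$ with degree sequence $(1,4,2,2,2,3)$ and $\prod_1(G')=9216<\prod_1(G)$. So the proposed local move can strictly decrease $\prod_1$, and no contradiction is obtained; you cannot rule such configurations out at this stage, since the lemmas controlling the level sizes (Lemmas 3.4 and 3.6) are proved later and themselves rely on this one.

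The paper avoids this entirely with a pure edge addition: if $|V_d|\ge 2$, pick $x\in V_d$ and $y\in V_{d-3}$ and add the single edge $xy$. Since $d$ and $d-3$ have opposite parities, $G+xy$ is still bipartite; the remaining vertices of $V_d$ keep the diameter equal to $d$; the vertex $x$ simply drops to level $d-2$ in the new $d$-partition; and $\prod_1(G+xy)>\prod_1(G)$ holds trivially because exactly two degrees increase and none decrease. This monotonicity-under-edge-addition trick is the idea your argument is missing, and it is worth noting that it is the same mechanism you already used correctly for the completeness claim. In the example above it gives the degree sequence $(2,3,3,3,3,2)$ with $\prod_1=104976>11664$, as required.
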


\begin{proof}
By the concept of first multiplicative Zagreb index, the first part of this lemma is directly deduced, that is,
the induced graph $G[V_{i-1} \cup V_i]$ is a complete bipartite subgraph for   all $i$ with $1 \leq i \leq d$.

 Next,  we will prove the second part by a contradiction.
Let $d \geq 3$, $x \in  V_d$ and $y \in  V_{d-3}$. If $|V_d| \geq  2$, then $G+xy \in$ $  \mathcal{B}(n, d)$ and $V_0 \cup V_1 \cup \cdots \cup V_{d-3} \cup (V_{d-2} \cup \{x\})\cup V_{d-1} \cup (V_d -\{x\})$ is a partition of $G + xy$. By  routine calculations, we see that $\prod_1(G + xy) > \prod_1(G)$, which is a contradiction.

Finally, let $d =2 $. Suppose $|V_1|=s, |V_2|=t$, since $|V_0|=1$, $s+t+1=n$. If $t \geq  2$, then $G' \in$ $  \mathcal{B}(n, d)$ and $V_0' \cup V_1' \cup V_{2}' $ is a partition of $G'$ with $V_0'=V_0,   V_1'=V_1\cup \{x\}, V_{2}' =V_2-\{x\}$, where $x\in V_2$. Since  $G[V_{i-1}' \cup V_i']$ ($i=1,2$) is a complete bipartite subgraph,
by the routine calculations, we see that 
\begin{eqnarray}
 \nonumber  \frac{ \prod_1({G} )}{\prod_1({G'} )} &&=~~~~\frac{ s^2(t+1)^{2s}s^{2t}}{(s+1)^{2t}t^{2(s+1)}}\\
  \nonumber&& =~~~~\frac{(\frac{s}{s+1})^{2t}s^2 }{(\frac{t}{t+1})^{2s}t^2} \;\;\;\;(by \;\;Proposition \;\;2.1)\\
&&\left\{\begin{array}{lcl} 
 <1,& \mbox{if} &  s<t, \\
>1,~~~~~~~~~~& \mbox{if} & s>t,\\
=1,~~~~~~~~~~& \mbox{if} &s=t.
\end{array}\right.
 \end{eqnarray}

Thus, when $t\geq 2$, if $d=2$, then $s=t=\frac{n-1}{2}$ with odd  $n$  and  $s=\frac{n}{2}$, $t=\frac{n}{2}-1$ with  even  $n$.
When $t=1$ and $d=2$, we have $|V_0|=t=1, s=n-2$, then $\prod_1 = (n-2)^44^{(n-2)}$. By  routine calculations, we complete the proof of Lemma 3.2.
\end{proof}

By the same method of Lemma 3.2, the following lemma is obtained analogously.

\begin{lemma}
Let $G \in$  $\mathcal{B}$$(n, d)$ with the maximal $\prod_2$-value. Then the induced graph $G[V_{i-1}\cup V_i]$ is a complete bipartite subgraph for   all $i$ with $1 \leq i \leq d$,  and $|V_d|=1$ if $d \geq 3$;   If $d=2$, then $|V_1|=|V_2|=\frac{n-1}{2}$ with $n$ is odd and  $|V_1|=\frac{n}{2}$, $|V_2|=\frac{n}{2}-1$ with $n$ is even.
\end{lemma}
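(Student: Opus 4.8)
The plan is to mirror the proof of Lemma 3.2 almost verbatim, replacing the first multiplicative Zagreb index $\prod_1$ with the second one $\prod_2$, and using Proposition 2.2 in place of Proposition 2.1. First I would observe that since $\prod_2(G)=\prod_{u\in V(G)}d(u)^{d(u)}$ is strictly increasing in each vertex degree, adding any edge strictly increases $\prod_2$; hence for the graph attaining the maximal $\prod_2$-value the induced subgraph $G[V_{i-1}\cup V_i]$ must be complete bipartite for every $i$ with $1\le i\le d$ (otherwise we could add a missing edge between consecutive partition sets without changing the diameter or the $d$-partition), which settles the first assertion.

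Next I would handle the case $d\ge 3$ by contradiction, exactly as in Lemma 3.2. Suppose $|V_d|\ge 2$, pick $x\in V_d$ and $y\in V_{d-3}$, and form $G+xy$; then $V_0\cup\cdots\cup V_{d-3}\cup(V_{d-2}\cup\{x\})\cup V_{d-1}\cup(V_d\setminus\{x\})$ is a valid $d$-partition of $G+xy\in\mathcal{B}(n,d)$. A routine degree-by-degree comparison shows $\prod_2(G+xy)>\prod_2(G)$, contradicting maximality, so $|V_d|=1$. Here the edge addition only increases some degrees, so monotonicity of $t\mapsto t^t$ makes this step essentially immediate.

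For $d=2$, write $|V_1|=s$, $|V_2|=t$ with $s+t+1=n$, and when $t\ge 2$ move one vertex $x$ from $V_2$ to $V_1$ to obtain $G'$ with partition $V_0\cup(V_1\cup\{x\})\cup(V_2\setminus\{x\})$; since both $G$ and $G'$ are the complete bipartite-type graphs forced by the first part, the degrees are fully determined, and I would compute the ratio $\prod_2(G)/\prod_2(G')$ explicitly. Grouping the factors into a ratio of the form $\bigl(s^s/(s+1)^{s+1}\bigr)^{\!t}$ against $\bigl(t^t/(t+1)^{t+1}\bigr)^{\!s}$ times a bounded correction and invoking Proposition 2.2 (that $g(x)=x^x/(x+m)^{x+m}$ is decreasing), one gets the trichotomy: the ratio is $<1$, $>1$, or $=1$ according as $s<t$, $s>t$, or $s=t$. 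This forces $s=t=\frac{n-1}{2}$ for odd $n$ and $s=\frac n2$, $t=\frac n2-1$ for even $n$; the leftover case $t=1$ gives a single explicit graph to compare against, which is a direct computation.

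The main obstacle is the bookkeeping in the $d=2$ ratio computation: one must correctly track which degrees change when $x$ is relocated (vertices in $V_1$ change degree from $s+t$-ish quantities, the vertex $u$ in $V_0$ keeps degree $s$ vs.\ $s+1$, and $x$ itself changes) and then package the product so that Proposition 2.2 applies cleanly. Since the statement explicitly says this lemma follows ``by the same method of Lemma 3.2,'' I expect the intended proof is short; the one subtlety worth a sentence of care is confirming that the extra non-$g(x)$ factors in the ratio are dominated by the monotone factors, so that the sign of the comparison is genuinely governed by whether $s<t$ or $s>t$.
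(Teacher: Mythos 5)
Your proposal follows the paper's proof essentially step for step: edge addition to get completeness of each $G[V_{i-1}\cup V_i]$ and to force $|V_d|=1$ when $d\ge 3$, then for $d=2$ the relocation of one vertex from $V_2$ to $V_1$ and the sign of the ratio $\prod_2(G)/\prod_2(G')$ according as $s<t$, $s>t$ or $s=t$. The only real difference is that in the paper this ratio collapses exactly to $\frac{(s(t+1))^{s(t+1)}}{(t(s+1))^{t(s+1)}}$, so the trichotomy follows at once from monotonicity of $x\mapsto x^x$ (comparing $st+s$ with $st+t$), with no residual ``bounded correction'' to control as in the grouping you sketch.
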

\begin{proof}

By the same method of Lemma 3.2, the induced graph $G[V_{i-1}\cup V_i]$ is a complete bipartite subgraph for   all $i$ with $1 \leq i \leq d$,  and $|V_d|=1$ if $d \geq 3$.

Let $d =2 $, and suppose $|V_1|=s, |V_2|=t$, since $|V_0|=1$, $s+t+1=n$. If $t \geq  2$, then $G' \in$ $  \mathcal{B}(n, d)$ and $V_0' \cup V_1' \cup V_{2}' $ is a partition of $G'$ with $V_0'=V_0,   V_1'=V_1\cup \{x\}, V_{2}' =V_2-\{x\}$, where $x\in V_2$. Since  $G[V_{i-1}' \cup V_i']$ ($i=1,2$) is a complete bipartite subgraph,
by  routine calculations, we see that
\begin{eqnarray}
 \nonumber  \frac{ \prod_2({G} )}{\prod_2({G'} )} &&=~~~~\frac{ s^s(t+1)^{s(t+1)}s^{st}}{(s+1)^{t(s+1)}t^{t(s+1)}}
\\  \nonumber &&=~~~~ \frac{ (s(t+1))^{s(t+1)}}{ (t(s+1))^{t(s+1)}}\\
  &&\left\{ \begin{array}{lcl}
 <1,& \mbox{if} &  s<t, \\
>1,~~~~~~~~~~& \mbox{if} & s>t,\\
=1,~~~~~~~~~~& \mbox{if} &s=t.
\end{array}\right.
\end{eqnarray}

Thus, when $t\geq 2$, if $d=2$, then $s=t=\frac{n-1}{2}$ with  odd $n$ and  $s=\frac{n}{2}$, $t=\frac{n}{2}-1$ with  even $n$.
When $t=1$ and $d=2$, we have $|V_0|=t=1, s=n-2$, then $\prod_2 = (n-2)^{2n-4}4^{(n-2)}$. By  routine calculations, we complete the proof of Lemma 3.2.
\end{proof}

\begin{lemma}  Let  $G \in$  $\mathcal{B}$$(n, d)$  with the maximal  $\prod_1$-value. Then there are the following results:

(i)There does not exist three partition sets $V_i$, $V_j $ and $V_k$ such that $|V_i| \geq 2$, $|V_j|\geq 2$ and $|V_k|\geq  2$ with
$0\leq i , j ,k \leq d$.

(ii)If there are two partition sets $V_i$ and $V_j$ such that $|V_i| \geq 2$ and $|V_j| \geq  2$, then $|i-j|=1$ with $0\leq i, j \leq d.$

\end{lemma}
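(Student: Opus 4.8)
The plan is to argue by contradiction in both parts, using a single ``mass-shifting'' operation on the $d$-partition together with Proposition 2.1, exactly as in the proof of Lemma 3.2. Throughout I will exploit the structural fact (already established in Lemma 3.2, whose first part applies here) that for a maximal $\prod_1$-graph every consecutive pair $G[V_{i-1}\cup V_i]$ is a complete bipartite subgraph, so that the degree of a vertex in $V_i$ is exactly $m_{i-1}+m_{i+1}$ (with the convention $m_{-1}=m_{d+1}=0$), and hence $\prod_1(G)$ is a completely explicit function of $m_0=1,m_1,\dots,m_d$.

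For part (ii), suppose $|V_i|\ge 2$ and $|V_j|\ge 2$ with $j\ge i+2$, and among all such pairs choose one; I want to show $G$ is not maximal. Pick $x\in V_j$ and move it into $V_{j-1}$, i.e. pass to the partition with $V_{j-1}'=V_{j-1}\cup\{x\}$ and $V_j'=V_j\setminus\{x\}$ (and complete $G$ to the graph $G'$ that is complete bipartite on each consecutive pair of the new partition). Since $j\ge i+2$, the set $V_i$ is untouched and still has size $\ge 2$, so diameter is preserved and $G'\in\mathcal B(n,d)$. Computing $\prod_1(G)/\prod_1(G')$: only the vertices in $V_{j-2},V_{j-1},V_j,V_{j+1}$ change degree, and after cancelling common factors the ratio collapses — just as in the displayed computation of Lemma 3.2 — to a product of terms of the form $\bigl(\tfrac{s}{s+1}\bigr)^{\text{power}}s^{2}\big/\bigl(\tfrac{t}{t+1}\bigr)^{\text{power}}t^{2}$, which by Proposition 2.1 is strictly less than $1$ when $|V_j|-1<$ the relevant neighbouring size, and otherwise one moves $x$ the other way. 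The point is that whenever $V_i,V_j$ are two nonadjacent ``large'' sets, some single vertex can be pushed one step toward the other, strictly increasing $\prod_1$; iterating drives the two large sets to adjacent positions. This contradicts maximality, proving (ii).

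Part (i) then follows quickly from (ii): if there were three sets $V_i,V_j,V_k$ (say $i<j<k$) each of size $\ge 2$, then applying (ii) to each of the three pairs forces $|i-j|=|j-k|=|i-k|=1$, which is impossible for three distinct indices. Alternatively, and more robustly, I would argue directly: given three large sets, at least two of them, say $V_i$ and $V_k$, are nonadjacent ($k\ge i+2$); moving a vertex of $V_k$ toward $V_i$ keeps all three large (the third set $V_j$ is either untouched or only grows, and $V_k$ retains size $\ge 1$ but we only needed $V_i$ and one other to certify the diameter), staying in $\mathcal B(n,d)$, and the same ratio computation gives a strict increase in $\prod_1$ — contradiction. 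Either route reduces (i) to (ii) plus bookkeeping.

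The main obstacle, and the only place requiring care, is verifying that the mass-shifting operation genuinely keeps the graph in $\mathcal B(n,d)$: moving $x$ from $V_j$ to $V_{j-1}$ could in principle shrink some distance below $d$ or, if $|V_j|$ drops to $0$, kill the partition set $V_j$ and reduce the diameter. This is exactly why the hypothesis ``two (resp.\ three) sets of size $\ge 2$'' is needed — it guarantees a surviving large set far enough from the shifted vertex to witness that $\mathrm{diam}=d$ is preserved, and that no $V_\ell$ between becomes empty. Once the feasibility of $G'$ is pinned down, the inequality itself is the same routine degree-bookkeeping plus Proposition 2.1 already carried out in Lemma 3.2, so I would present it in that condensed form rather than recomputing it in full.
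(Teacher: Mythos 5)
Your overall strategy (assume two large sets at distance $\ge 2$, relocate one vertex, contradict maximality) is the same as the paper's, and your reduction of (i) to (ii) is logically fine. But there is a genuine gap at the heart of (ii): you assert that ``some single vertex can be pushed one step toward the other, strictly increasing $\prod_1$,'' with the fallback ``otherwise one moves $x$ the other way,'' and this dichotomy is exactly the nontrivial content of the lemma --- it is never verified. It is not automatic that at least one of the two candidate moves increases $\prod_1$; a priori the configuration could be a local maximum under both. The paper closes this hole explicitly: it writes the ratio $\prod_1(G)/\prod_1(G')$ for the move of a vertex of $V_j$ as $A^2/B^2$ and for the mirror move of a vertex of $V_i$ as $C^2/D^2$, where $A,B,C,D$ are explicit products in the layer sizes $m_{i-2},m_{i-1},m_i,m_{j},m_{j+1},m_{j+2}$, and then proves (Claims 1 and 2) that $A\le B$ forces $D<C$, so at least one of the two moves is strictly improving. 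Your appeal to ``the same routine degree-bookkeeping as in Lemma 3.2'' does not supply this: the computation in Lemma 3.2 is the special case $d=2$ with $|V_0|=1$, where the two directions are symmetric in $s$ and $t$; in the general setting the ratio also involves $m_{i-2},m_{i-1},m_{j+1},m_{j+2}$ and the symmetry is lost.

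A secondary divergence: your elementary move shifts $x$ from $V_j$ to the adjacent set $V_{j-1}$, whereas the paper's move deletes all edges at $u\in V_j$ and rejoins $u$ to $V_{i-1}\cup V_{i+1}$, i.e.\ relocates it all the way to position $i$; the paper's $A,B$ versus $C,D$ comparison is tailored to that long-range move, and it is not clear the one-step version admits an analogous ``one of the two directions must win'' claim. Your ``iterate until the large sets are adjacent'' language is also unnecessary (and slightly misleading) in a proof by contradiction --- one strictly improving move suffices --- but that is cosmetic. Finally, note that the paper must treat separately the case in which an intermediate set is also large (its Case 2, three consecutive large sets, with modified quantities $A',B',C',D'$); your Case analysis, which implicitly assumes the intermediate sets are singletons, does not cover this, so even granting the dichotomy your proof of (ii) for an arbitrary pair of large sets is incomplete.
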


\begin{proof}
For $d=2$, the proof is done. For $d=3$, by $|V_0|=|V_d|=1$, the proof is also done. Now we consider the case of  $d \geq 4$. If there exists just one partition set (say $V_i$)  in $G$ such that $|V_i| \geq2$, then we are done. In order to show this lemma, it is enough to prove that if there exist at least two partition sets whose orders are greater than or equal to $2$, then we can deduce that for each pair of such partition sets (say $V_i $ and $V_j$) with $|V_i| \geq 2$ and $|V_j| \geq 2$, it must be the case that $|i - j| =1$.  Note that by Lemma 3.2, $G[V_{\ell-1}\cup V_{\ell}]$  is a complete bipartite subgraph for each $\ell \in  \{1, 2,\cdots , d\}$. We can obtain that each vetex in partition set $V_{\ell}$  has the same degree (say $d_{\ell}) ~\ell = 1, \cdots , d$.

Choose a graph $G$ in  $\mathcal{B}(n, d)$ such that it is achieving the maximal value of the first multiplicative Zagreb index. Assume that there exist two partition sets $V_i$ and $V_j$ such that $|V_i| \geq  2, |V_j| \geq 2 $ and $|i-j|\geq 2$.
We consider two cases below.

\noindent{\bf \small Case 1. } $m_{i+1}=m_{i+2}=m_{i+3}=\cdots =m_{j-2}= m_{j-1}=1$.

In order to proceed conveniently, we set that

$ A = (m_{i-2}+m_i)^{m_{i-1}}(1+m_i)(1+m_j)(1+m_{j+1})(m_j + m_{j+2})^{m_{j+1}}$,

 $ B=(m_{i-2}+m_i +1)^{m_{i-1}}(2+m_i) m_j(1+m_{i-1})(m_j+m_{j+2}-1)^{m_{j+1}}$,

  $C=(1+m_j)(m_j+m_{j+2})^{m_{j+1}}(1+m_i)(1+m_{i-1})(m_i+m_{i-2})^{m_{i-1}}$,

  $D=(2+m_j)(m_j+m_{j+2}+1)^{m_{j+1}}(m_i)(1+m_{j+1})(m_i+m_{i-2}-1)^{m_{i-1}}$.

\noindent{\bf \small Subcase  1.1.   } $A < B$.

 We choose a vertex $u \in V_j$ and let $G '��$  be the graph obtained by deleting all edges incident to u and joining u to each vertex in $V_{i-1}\cup V_{i+1}$ of $G $. Clearly $G '�� \in \mathcal{B}$$(n, d)$. Hence we have

\begin{eqnarray}
 \nonumber  \frac{ \prod_1({G} )}{\prod_1({G'} )} &=&\frac{d_{i-1}^{2m_{i-1}}d_i^{2m_i}d_{i+1}^{2m_{i+1}}  d_{j-1}^{2m_{j-1}}d_j^{2m_j}d_{j+1}^{2m_{j+1}} }{ (d_{i-1}+1)^{2m_{i-1}}d_i^{2(m_i+1)}(d_{i+1}+1)^{2m_{i+1}}  (d_{j-1}-1)^{2m_{j-1}}d_j^{2(m_j-1)}(d_{j+1}-1)^{2m_{j+1}}}
\\  \nonumber &=&(\frac{m_{i-2}+m_i}{1+m_{i-2}+m_i})^{2m_{i-1}} (\frac{1+m_i}{2+m_i})^2 [\frac{(1+m_j)(1+m_{j+1})(m_j+m_{j+2})^{m_{j+1}}}{m_j(1+m_{i-1})(m_j+m_{j+2}-1)^{m_{j+1}}}]^2 \;\;(by \; (3))
  \\ \nonumber &=&\frac{A^2}{B^2}
 < 1.
\end{eqnarray}

 Thus,  $\prod_1(G') > \prod_1(G)$,  which is a contradiction to the assumption.

\noindent{\bf \small Subcase 1.2. } $A\geq B$.��

\noindent{\bf Claim 1.} If  $A\leq B$, then $D< C$.
\begin{proof}
 It is straightforward to check that $D< C$ since $A \leq B$.
\end{proof}

 By Claim 1, if $C\leq D$, then $A>B$. Thus, if $A\geq  B$, then $C<D$.    (Otherwise, if $A\geq B$, then $C>D$ $\Leftrightarrow$ if $C\leq D$, then $A\leq B$, a contradiction).

We choose a vertex $u \in V_i$ and let $G '��$  be the graph obtained by deleting all edges incident to u and joining u to each vertex in $V_{j-1}\cup V_{j+1}$ of $G $. Clearly $G '�� \in \mathcal{B}$$(n, d)$. Hence we have
\begin{eqnarray}
 \nonumber  \frac{ \prod_1({G} )}{\prod_1({G'} )} =\frac{C^2��}{D^2��}<1. ��
\end{eqnarray}

 Thus,  $\prod_1(G') > \prod_1(G)$,  which is a contradiction to the assumption.

\noindent{\bf \small Case 2.  }  $V_i$, $V_{i+1} $ and $V_{i+2}$  are successive three partitions such that  $|V_i|\geq 2$ , $|V_{i+1}|\geq 2 $ and $|V_{i+2}|\geq 2$, where $V_{j}=V_{i+2}$.  Similarly, we set that

      $A' = (m_{i-2}+m_i)^{m_{i-1}}(m_{i+1}+m_{j+1})(m_j + m_{j+2})^{m_{j+1}}$,

   $B'=(m_{i-2}+m_i +1)^{m_{i-1}}(m_{i-1}+m_{i+1})(m_j+m_{j+2}-1)^{m_{j+1}}$,

 $ C'=(m_j+m_{j+2})^{m_{j+1}}(m_{i+1}+m_{i-1})(m_i+m_{i-2})^{m_{i-1}}$,

  $D'=(m_j+m_{j+2}+1)^{m_{j+1}}(m_{i+1}+m_{j+1})(m_i+m_{i-2}-1)^{m_{i-1}}$.

\noindent{\bf \small Subcase 2.1. } $A'< B'$.

 We choose a vertex $u \in V_j$ and let $G '��$  be the graph obtained by deleting all edges incident to u and joining u to each vertex in $V_{i-1}\cup V_{i+1}$ of $G $. Clearly $G '�� \in \mathcal{B}$$(n, d)$. Hence we have
\begin{eqnarray}
 \nonumber  \frac{ \prod_1({G} )}{\prod_1({G'} )} =(\frac{A'}{B'})^2<1.
 \end{eqnarray}

\noindent{\bf \small Subcase  2.2 }  $A' \geq B' $.��

\noindent{\bf Claim 2��} If  $A'\leq B'$, then $D'< C'$.
\begin{proof}
It is straightforward to check that $D' < C'$ since $A' \leq B'$.
\end{proof}

 By Claim 2 , if $C' \leq D'$, then $A' > B'$. Thus, if $A' \geq  B' $, then $C' < D' $.   (Otherwise, if $A' \geq  B'$, then $C'>D'$ $\Leftrightarrow$ if $C' \leq D' $, then $A' < B'$, a contradiction). Thus, if $A'\geq  B'$,   then $C'< D'$.

 We choose a vertex $u \in V_i$ and let $G '��$  be the graph obtained by deleting all edges incident to u and joining u to each vertex in $V_{j-1}\cup V_{j+1}$ of $G $. Clearly $G '�� \in \mathcal{B}$$(n, d)$. Hence we have
\begin{eqnarray}
 \nonumber  \frac{ \prod_1({G} )}{\prod_1({G'} )}=(\frac{C'��}{D'})^2<1. ��
\end{eqnarray}

 This completes the proof of Lemma 3.4.
\end{proof}

By the same method of Lemma 3.4, the following lemma is obtained analogously.

\begin{lemma}
 Let $G \in \mathcal{B}(n, d)$ with the maximal $\prod_2$-value. Then there exist at most two partition sets $V_i$ and $V_j$ such that $|V_i| \geq  2$, $|V_j| \geq 2$ and $|i-j| =1$ with $0\leq i, j \leq d$.
\end{lemma}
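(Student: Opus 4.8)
The plan is to mimic the proof of Lemma 3.4 almost verbatim, since Lemma 3.5 is the $\prod_2$-analogue of Lemma 3.4 and the author explicitly says ``By the same method of Lemma 3.4''. First I would fix $G \in \mathcal{B}(n,d)$ attaining the maximal $\prod_2$-value, recall from Lemma 3.3 that $G[V_{\ell-1}\cup V_\ell]$ is a complete bipartite subgraph for every $\ell$, so every vertex of a partition set $V_\ell$ has the same degree $d_\ell = m_{\ell-1}+m_{\ell+1}$. As in Lemma 3.4, the cases $d=2$ and $d=3$ are immediate (for $d=3$, $|V_0|=|V_d|=1$), so I would focus on $d\ge 4$ and assume, for contradiction, that there exist partition sets $V_i, V_j$ with $|V_i|\ge 2$, $|V_j|\ge 2$ and $|i-j|\ge 2$. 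The goal is to build a graph $G'\in\mathcal{B}(n,d)$ with $\prod_2(G')>\prod_2(G)$.

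Next I would split into the same two configurations as Lemma 3.4: Case 1, where all the partition sets strictly between $V_i$ and $V_j$ are singletons ($m_{i+1}=\cdots=m_{j-1}=1$), and Case 2, where $V_i, V_{i+1}, V_{i+2}$ are three consecutive partition sets each of order $\ge 2$. In each case I perform one of two local surgeries: either move a vertex $u\in V_j$ so that it becomes adjacent to exactly $V_{i-1}\cup V_{i+1}$ (deleting its old edges), or move a vertex $u\in V_i$ so that it becomes adjacent to exactly $V_{j-1}\cup V_{j+1}$; both keep the graph in $\mathcal{B}(n,d)$. For $\prod_2$ one then writes $\prod_2(G)/\prod_2(G')$ as a ratio of terms $d_\ell^{d_\ell m_\ell}$ over the affected partition sets; using $\prod_2(G)=\prod_u d(u)^{d(u)}$, the ratio collapses to an expression of the form $(s(t+1))^{s(t+1)}/(t(s+1))^{t(s+1)}$-type quantities, exactly as in the $d=2$ computation of Lemma 3.3. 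I would introduce the analogues $A,B,C,D$ (and $A',B',C',D'$) of Lemma 3.4 — now with exponents adjusted for the $\prod_2$ weighting (e.g. the factor $d_\ell^{d_\ell}$ rather than $d_\ell^2$ contributes a different power) — so that the first surgery gives ratio $(A/B)^{?}$ and the second gives $(C/D)^{?}$ up to a fixed positive power.

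The combinatorial heart, as in Lemma 3.4, is the dichotomy argument: I would prove ``Claim: if $A\le B$ then $D<C$'' by the inequality $f(x)=x/(x+m)$ being increasing (Proposition 2.1) together with $g(x)=x^x/(x+m)^{x+m}$ being decreasing (Proposition 2.2) — these two monotonicity facts are precisely why the paper stated Propositions 2.1 and 2.2. From that claim one gets the logical contrapositive: if $A\ge B$ then $C<D$, so \emph{whichever} of $A<B$ or $A\ge B$ holds, one of the two surgeries strictly increases $\prod_2$, contradicting maximality. Running the same argument in Case 2 with the primed quantities finishes the proof, so that no two ``large'' partition sets can be at distance $\ge 2$, and (combining with the Case-2 exclusion) there are at most two of them and they must be adjacent.

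The main obstacle I anticipate is purely bookkeeping: verifying that, after the degree-weighted product is simplified, the ratio really does reduce to the stated $A/B$ and $C/D$ forms and that the ``$A\le B\Rightarrow D<C$'' claim still follows from Propositions 2.1 and 2.2 with the $\prod_2$ exponents. For $\prod_2$ the cancellations are messier than for $\prod_1$ because each vertex carries a $d(u)^{d(u)}$ weight rather than $d(u)^2$, so the exponents on the $(m_j+m_{j+2})$-type factors change; one must check that the monotonicity directions of $f$ and $g$ line up the same way so the contradiction still goes through. Since the $d=2$ warm-up in Lemma 3.3 already shows this pattern works for $\prod_2$, I expect it to go through, but that exponent-chasing is the step where an error is most likely to hide, so I would carry it out carefully rather than waving it away as ``routine calculations''.
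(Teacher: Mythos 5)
Your proposal follows the paper's proof of this lemma essentially verbatim: the same reduction to $d\ge 4$, the same two cases (singleton intermediate sets versus three consecutive large sets), the same two vertex-relocation surgeries, and the same dichotomy via quantities $A_1,B_1,C_1,D_1$ whose comparison ("if $A_1\ge B_1$ then $D_1>C_1$") rests on Proposition 2.2. The only difference is presentational — the paper states the key claim in its direct rather than contrapositive form and writes out the degree-weighted exponents explicitly — so your plan is correct and matches the paper's argument.
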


\begin{proof}
If $d=2$, the proof is done. If $d=3$, by $|V_0|=|V_d|=1$, the proof is also done. Now we consider the case of  $d \geq 4$. If there exists just one partition set (say $V_i$)  in $G$ such that $|V_i| \geq2$, then we are done. In order to show this lemma, it is enough to prove that if there exist at least two partition sets whose orders are greater than or equal to $2$, then we can deduce that for each pair of such partition sets (say $V_i $ and $V_j$) with $|V_i| \geq 2$ and $|V_j| \geq 2$, and one must have $|i - j| =1$.  Note that by Lemma 3.2, $G[V_{\ell-1}\cup V_{\ell}]$  is a complete bipartite subgraph for each $\ell \in  \{1, 2,\cdots , d\}$. We can obtain that each of the vertices in partition set $V_{\ell}$  has the same degree (say $d_{\ell}), ~\ell = 1, \cdots , d$.

Choose a graph $G$ in  $\mathcal{B}(n, d)$ {that achieves} the maximal value of the second multiplicative Zagreb index. Assume that there exist two partition sets $V_i$ and $V_j$ such that $|V_i| \geq  2, |V_j| \geq 2 $ and $|i-j|\geq 2$.

\noindent{\bf \small Case 1. } $m_{i+1}=m_{i+2}=m_{i+3}=\cdots =m_{j-2}= m_{j-1}=1$.

In order to proceed conveniently, we set that  

$ A_1= (m_{i-2}+m_i)^{(m_{i-2}+m_i)m_{i-1}}(1+m_{j+1})^{(1+m_{j+1})}(1+m_i)^{(1+m_i)}(1+m_j)^{(1+m_{j})}$

$~~~~~~~(m_j + m_{j+2})^{(m_j + m_{j+2})m_{j+1}}$,

 $ B_1=( m_{i-2}+m_i+1)^{(m_{i-2}+m_i+1)m_{i-1}}(1+m_{i-1})^{(1+m_{i-1})}(2+m_i)^{(2+m_i)}(m_j)^{m_{j}}$

$~~~~~~~(m_j + m_{j+2}-1)^{(m_j + m_{j+2}-1)m_{j+1}}$,

  $C_1=(m_{i-2}+m_i)^{(m_{i-2}+m_i)m_{i-1}}(1+m_{i-1})^{(1+m_{i-1})}(1+m_i)^{(1+m_i)}(1+m_j)^{(1+m_{j})}$

$~~~~~~~(m_j + m_{j+2})^{(m_j + m_{j+2})m_{j+1}}$,

  $D_1=( m_{i-2}+m_i-1)^{(m_{i-2}+m_i-1)m_{i-1}}(1+m_{j+1})^{(1+m_{j+1})}(m_i)^{m_i}(2+m_j)^{(2+m_{j})}$

$~~~~~~~(m_j + m_{j+2}+1)^{(m_j + m_{j+2}+1)m_{j+1}}$.

  \noindent{\bf \small Subcase 1.1. }   $A_1 < B_1$.

 We choose a vertex $u \in V_j$ and let $G '��$  be the graph obtained by deleting all edges incident to u and joining u to each vertex in $V_{i-1}\cup V_{i+1}$ of $G $. Clearly $G '�� \in \mathcal{B}$$(n, d)$. Hence we have
\begin{eqnarray}
 \nonumber  \frac{ \prod_2({G} )}{\prod_2({G'} )} &=&\frac{(m_{i-2}+m_i)^{(m_{i-2}+m_i)m_{i-1}}(1+m_{j+1})^{(1+m_{j+1})}(1+m_i)^{(1+m_i)}}{( m_{i-2}+m_i+1)^{(m_{i-2}+m_i+1)m_{i-1}}(1+m_{i-1})^{(1+m_{i-1})}(2+m_i)^{(2+m_i)}}
   \\ \nonumber &&\times \frac{(1+m_j)^{(1+m_{j})}(m_j + m_{j+2})^{(m_j + m_{j+2})m_{j+1}}}{(m_j)^{m_{j}}(m_j + m_{j+2}-1)^{(m_j + m_{j+2}-1)m_{j+1}}}
  \\ \nonumber &=&\frac{A_1}{B_1}<1.
\end{eqnarray}

  \noindent{\bf \small Subcase 1.2}  $A_1 \geq B_1$.��
 
 { We consider another claim.}

  \noindent{\bf Claim 3} If  $A_1\geq B_1$, then $D_1>C_1$.

\begin{proof} 
\begin{eqnarray}
 \nonumber D_1 &=&( m_{i-2}+m_i-1)^{(m_{i-2}+m_i-1)m_{i-1}}(1+m_{j+1})^{(1+m_{j+1})}(m_i)^{m_i}
 \\ \nonumber &&\;\;\;\; \;\; \times (2+m_j)^{(2+m_j)}(m_j + m_{j+2}+1)^{(m_j + m_{j+2}+1)m_{j+1}}
\\ \nonumber  &=&\frac{( m_{i-2}+m_i-1)^{(m_{i-2}+m_i-1)m_{i-1}}(m_i)^{m_i}(2+m_j)^{(2+m_{j})}}{  (m_{i-2}+m_i)^{(m_{i-2}+m_i)m_{i-1}}(1+m_i)^{(1+m_i)}(1+m_j)^{(1+m_{j})}}
\\ \nonumber  &&\;\;\;\;\;\; \times \frac{(m_j + m_{j+2}+1)^{(m_j + m_{j+2}+1)m_{j+1}}}{(m_j + m_{j+2})^{(m_j + m_{j+2})m_{j+1}}} \times A_1 \;\;\;\;\;\;\;(Note \;\;A_1 \;\;\geq B_1)
\\ \nonumber  &\geq & \left(\frac{ \frac{( m_{i-2}+m_i-1)^{(m_{i-2}+m_i-1)}}{( m_{i-2}+m_i)^{(m_{i-2}+m_i)}}} {\frac{( m_{i-2}+m_i)^{(m_{i-2}+m_i)}}{( m_{i-2}+m_i+1)^{(m_{i-2}+m_i+1)}}}\right)^{m_{i-1}} \times \left(\frac{ \frac{( m_{j}+m_{j+2}-1)^{(m_{j}+m_{j+2}-1)}}{( m_{j}+m_{j+2})^{(m_{j}+m_{j+2})}}} {\frac{( m_{j}+m_{j+2})^{(m_{j}+m_{j+2})}}{( m_{j}+m_{j+2}+1)^{(m_{j}+m_{j+2}+1)}}}\right)^{m_{j+1}}
\\ \nonumber &&\;\;\;\; \times \frac{ \frac{( m_{j})^{m_j}}{( m_{j}+1)^{(m_{j}+1)}}} {\frac{( m_{j}+1)^{(m_{j}+1)}}{( m_{j}+2)^{(m_{j}+2)}}}
\times  \frac{ \frac{( m_i)^{m_i}}{( m_{i }+1)^{(m_{i }+1)}}} {\frac{( m_{i 2}+1)^{(m_{i }+1)}}{( m_{i }+2)^{(m_{i }+2)}}} \times C_1
\\ \nonumber  &>& C_1 \;\;(by\;\; Proposition \;\;2.2).
\end{eqnarray}
\end{proof}

We choose a vertex $u \in V_i$ and let $G '��$  be the graph obtained by deleting all edges incident to u and joining u to each vertex in $V_{j-1}\cup V_{j+1}$ of $G $. Clearly $G '�� \in \mathcal{B}$$(n, d)$. Hence we have
\begin{eqnarray}
 \nonumber  \frac{ \prod_2({G} )}{\prod_2({G'} )} =\frac{C_1}{D_1}<1.��
\end{eqnarray}

\noindent{\bf \small Case 2. } $V_i$, $V_{i+1} $ and $V_{i+2}$ is successive three partitions such that  $|V_i|\geq 2$ , $|V_{i+1}|\geq 2 $ and $|V_{i+2}|\geq 2$, where $V_{j}=V_{i+2}$.  Here we let that 

     $ A_1'= (m_{i-2}+m_i)^{(m_{i-2}+m_i)m_{i-1}}(m_{i+1}+m_{j+1})^{(m_{i+1}+m_{j+1})}(m_j + m_{j+2})^{(m_j + m_{j+2})m_{j+1}}$,

 $ B_1'=( m_{i-2}+m_i+1)^{(m_{i-2}+m_i+1)m_{i-1}} (m_{i+1}+m_{i-1})^{(m_{i+1}+m_{i-1})} (m_j + m_{j+2}-1)^{(m_j + m_{j+2}-1)m_{j+1}}$,

  $C_1'=(m_{i-2}+m_i)^{(m_{i-2}+m_i)m_{i-1}}  (m_{i+1}+m_{i-1})^{(m_{i+1}+m_{i-1})}(m_j + m_{j+2})^{(m_j + m_{j+2})m_{j+1}}$,

  $D_1'=( m_{i-2}+m_i-1)^{(m_{i-2}+m_i-1)m_{i-1}} (m_{i+1}+m_{j+1})^{(m_{i+1}+m_{j+1})} (m_j + m_{j+2}+1)^{(m_j + m_{j+2}+1)m_{j+1}}$.

  \noindent{\bf \small Subcase 2.1.} $A_1'< B_1'$.

 We choose a vertex $u \in V_j$ and let $G '��$  be the graph obtained by deleting all edges incident to u and joining u to each vertex in $V_{i-1}\cup V_{i+1}$ of $G $. Clearly $G '�� \in \mathcal{B}$$(n, d)$. Hence we have
\begin{eqnarray}
 \nonumber  \frac{ \prod_2({G} )}{\prod_2({G'} )} = \frac{A_1'}{B_1'}  <1.
 \end{eqnarray}

\noindent{\bf \small Subcase 2.2.} $A_1' \geq B_1' $.��

 { We consider another claim.}

 \noindent{\bf Claim 4.} If  $A_1'\geq B_1'$, then $D_1'> C_1'$.

\begin{proof}
\begin{eqnarray}
 \nonumber D_1' &=&( m_{i-2}+m_i-1)^{(m_{i-2}+m_i-1)m_{i-1}}(m_{i+1}+m_{j+1})^{(m_{i+1}+m_{j+1})}
 \\ \nonumber &&\;\;\;\; \times (m_j + m_{j+2}+1)^{(m_j + m_{j+2}+1)m_{j+1}}
\\ \nonumber  &=&\frac{( m_{i-2}+m_i-1)^{(m_{i-2}+m_i-1)m_{i-1}}(m_j + m_{j+2}+1)^{(m_j + m_{j+2}+1)m_{j+1}}}{  (m_{i-2}+m_i)^{(m_{i-2}+m_i)m_{i-1}}(m_j + m_{j+2})^{(m_j + m_{j+2})m_{j+1}}} \times A_1'  \;\;(A_1' \geq B_1')
\\ \nonumber  &\geq &  \left(\frac{ \frac{( m_{i-2}+m_i-1)^{(m_{i-2}+m_i-1)}}{( m_{i-2}+m_i)^{(m_{i-2}+m_i)}}} {\frac{( m_{i-2}+m_i)^{(m_{i-2}+m_i)}}{( m_{i-2}+m_i+1)^{(m_{i-2}+m_i+1)}}}\right)^{m_{i-1}} \times \left(\frac{ \frac{( m_{j}+m_{j+2}-1)^{(m_{j}+m_{j+2}-1)}}{( m_{j}+m_{j+2})^{(m_{j}+m_{j+2})}}} {\frac{( m_{j}+m_{j+2})^{(m_{j}+m_{j+2})}}{( m_{j}+m_{j+2}+1)^{(m_{j}+m_{j+2}+1)}}}\right)^{m_{j+1}}  \times C_1'
\\ \nonumber  & >& C_1' \;\;\;\;\;(by\;\; Proposition \;\;2.2).
\end{eqnarray}
\end{proof}

We choose a vertex $u \in V_i$ and let $G '��$  be the graph obtained by deleting all edges incident to u and joining u to each vertex in $V_{j-1}\cup V_{j+1}$ of $G $. Clearly $G '�� \in \mathcal{B}$$(n, d)$. Hence we have
\begin{eqnarray}
 \nonumber  \frac{ \prod_2({G} )}{\prod_2({G'} )}=\frac{C_1'��}{D_1'}<1. ��
\end{eqnarray}

 This completes the proof of Lemma 3.5.
\end{proof}

Let $G \in \mathcal{B}(n, d)$  with the maximal value of the second multiplicative Zagreb index. In view of Lemma 3.4, assume that $|V_a| > 1$ and $|V_{a+1}| > 1$, and $|V_j| = 1$ for $j\in \{0, 1, \cdots , d\}-\{a, a+1\}$. By Lemma 3.2, any two consecutive partition sets induce a complete bipartite subgraph. Therefore, we can define $G$ by $G[a \cdot 1, s, t, b\cdot 1]$, where $s=m_a = |V_a|, t= m_{a+1} =|V_{a+1}|$, $a + b =d - 1$ and $s + t =n -d+ 1$.  In the whole  context we assume, without loss of generality, that $a \leq  b$ for a graph $G[a \cdot 1, s,t , b \cdot 1]$.

\begin{lemma}
Let $G[a \cdot 1, s, t, b\cdot 1] \in \mathcal{B}(n, d)$ be a graph with the maximal value of  $\prod_1$-value. Then $|s - t| \leq 1$.
\end{lemma}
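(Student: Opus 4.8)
The plan is to compare the conjectured maximiser $G=G[a\cdot 1,s,t,b\cdot 1]$ with the graph obtained by moving one vertex between the two ``large'' classes $V_a$ and $V_{a+1}$, and to show, using Proposition 2.1, that whenever $|s-t|\ge 2$ this strictly increases $\prod_1$, contradicting maximality. (If $d=2$ the statement is already part of Lemma 3.2, so I would assume $d\ge 3$; then $a\ge 1$, and since $a\le b$ and $a+b=d-1$, every $V_j$ with $j\notin\{a,a+1\}$ is a singleton, $|V_a|=s\ge 2$, $|V_{a+1}|=t\ge 2$, and consecutive classes induce complete bipartite subgraphs by Lemma 3.2.)

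First I would record all the degrees of $G$. Each vertex of $V_a$ has degree $t+1$ and each vertex of $V_{a+1}$ has degree $s+1$; the singleton $V_{a-1}$ has degree $s+\epsilon_L$ and the singleton $V_{a+2}$ has degree $t+\epsilon_R$, where $\epsilon_L=1$ if $a\ge 2$ and $\epsilon_L=0$ if $a=1$, and similarly $\epsilon_R=1$ if $b\ge 2$, $\epsilon_R=0$ if $b=1$; every remaining vertex lies on one of the two pendant paths and has degree $1$ or $2$, so it contributes a factor $P_0$ depending only on $a$ and $d$. Hence
\[
\prod_1(G)=P_0\,(s+\epsilon_L)^2(t+\epsilon_R)^2(t+1)^{2s}(s+1)^{2t}.
\]
The step I expect to be the main pitfall is exactly this bookkeeping: one must not forget that the two ``boundary'' singletons $V_{a-1}$ and $V_{a+2}$ have degrees depending on $s$ and $t$; the rest of the degree count is routine.

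Next, assuming $|s-t|\ge 2$, I would argue by symmetry (interchanging the roles of $s,t$ and of $\epsilon_L,\epsilon_R$) and treat only $s\ge t+2$. Put $G'=G[a\cdot 1,s-1,t+1,b\cdot 1]$; since $s-1\ge 1$, $G'$ is again a connected bipartite graph of diameter $d$, so $G'\in\mathcal B(n,d)$. Reading off the degrees of $G'$ from the same description and cancelling $P_0$ gives $\prod_1(G)/\prod_1(G')=R^2$ with
\[
R=\frac{s+\epsilon_L}{s-1+\epsilon_L}\cdot\frac{t+\epsilon_R}{t+1+\epsilon_R}\cdot\frac{(t+1)^{s}(s+1)^{t}}{(t+2)^{s-1}s^{t+1}}.
\]
By Proposition 2.1 (monotonicity of $x\mapsto x/(x+m)$), $\frac{s+\epsilon_L}{s-1+\epsilon_L}\le\frac{s}{s-1}$ and $\frac{t+\epsilon_R}{t+1+\epsilon_R}\le\frac{t+1}{t+2}$, so $R\le \dfrac{(t+1)^{s+1}(s+1)^{t}}{(s-1)(t+2)^{s}s^{t}}$, and it remains to check $(t+1)\big(\tfrac{t+1}{t+2}\big)^{s}\big(\tfrac{s+1}{s}\big)^{t}<s-1$.

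For this last inequality I would insert $s\ge t+2$ twice: since $\tfrac{t+1}{t+2}<1$ we get $\big(\tfrac{t+1}{t+2}\big)^{s}\le\big(\tfrac{t+1}{t+2}\big)^{t+2}$, and since $1+\tfrac1x$ is decreasing we get $\big(\tfrac{s+1}{s}\big)^{t}\le\big(\tfrac{t+3}{t+2}\big)^{t}$; multiplying, the left-hand side is at most $(t+1)\big(\tfrac{t+1}{t+2}\big)^{2}\big(\tfrac{(t+1)(t+3)}{(t+2)^2}\big)^{t}<t+1\le s-1$, using $(t+1)(t+3)<(t+2)^2$. Hence $R<1$, i.e.\ $\prod_1(G')>\prod_1(G)$, contradicting the maximality of $G$; therefore $|s-t|\le 1$. (If one prefers to avoid these ad hoc estimates, the two exponential factors can instead be controlled via Proposition 2.2.) In short, everything except the degree list is routine algebra; the only genuinely delicate point besides the bookkeeping is that the final elementary inequality is tight as $t\to\infty$ with $s=t+2$, so the bound $s\ge t+2$ must be used at the right places rather than discarded.
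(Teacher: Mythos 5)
Your proof is correct and follows essentially the same route as the paper's: shift one vertex from the larger of $V_a,V_{a+1}$ to the smaller, write the ratio of the two $\prod_1$-values as a perfect square, and use Proposition 2.1 to show it is less than $1$, contradicting maximality. The only differences are cosmetic: you track the degrees of the boundary singletons with $\epsilon_L,\epsilon_R$ (the paper simply takes them to be $s+1$ and $t+1$, implicitly assuming $a,b\ge 2$), and your closing elementary estimate is organized slightly differently from the paper's substitution $t=s+2+p$.
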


\begin{proof}  If $d=2$, then the proof is straightforward. Now we suppose $d \geq  3$. According to the construction of the partition sets of G and by Lemma 3.1, we have $a \geq 1, b \geq 1$. Suppose $|s - t| \geq 2$. We assume without loss of generality that $t>s$, and then $t-s \geq 2$. Because $s = |V_a|$,  $t = |V_{a+1}|$, we have $d_a=t +1$,  $d_{a+1} = s+1$. Similarly, we have $d_{a-2}$,  $d_{a+3} \in \{  1, 2\}$. Thus $|d_{a-2}-d_{a+3}| \leq  2$.

Choose a vertex $u \in  V_{a+1}$ and let $G'$  be the graph obtained by deleting all edges incident to u and joining $u$ to each vertex
 in $(V_{a-1}\cup  V_{a+1})-\{u\}$. Clearly $G' \in $  $\mathcal{B}(n, d)$ and
  \begin{eqnarray}
 \nonumber  \frac{ \prod_1({G} )}{\prod_1({G'} )} &=&\frac{ (s+1)^2(t+1)^{2s}(s+1)^{2t}(t+1)^2}{(s+2)^2   t^{2(s+1)} (s+2)^{2(t-1)}t^2}
\\  \nonumber &=& \frac{\left(\frac{s+1}{s+2}\right)^{2t}}{  (\frac{ t }{ t+1  })^{2s+2}}\left(\frac{s+1}  {t}\right)^2
\\ \nonumber &= &\frac{(\frac{s+1}{s+2})^{2s+2}}{(\frac{t}{t+1})^{2s+2}}\left(\frac{s+1}{s+2}\right)^{2p+2}\left(\frac{s+1}{t}\right)^2
\\ \nonumber &&({Since}\; t\geq s+2, ~{ let} ~t=s+2+p \;and \; {by ~Proposition} \;2.1)
  \\ \nonumber &<&\left(\frac{s+1}{s+2}\right)^{2p+2}\left(\frac{s+1}{s+2+p}\right)^{2}
\\&<&1,
\end{eqnarray}

which is a contradiction. This completes the proof of Lemma 3.6.
\end{proof}

\begin{lemma}
Let $G[a \cdot1, s, t, b\cdot 1] \in$  $\mathcal{B}(n, d)$ be the graph with the maximal $\prod_2$-value. Then $|s -t| \leq 1$.
\end{lemma}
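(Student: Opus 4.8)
The plan is to imitate the proof of Lemma 3.6, replacing $\prod_1$ by $\prod_2$ and Proposition 2.1 by Proposition 2.2. If $d=2$ the statement follows at once from Lemma 3.3 (there $|V_1|$ and $|V_2|$ differ by at most $1$), so assume $d\ge 3$; then $|V_0|=|V_d|=1$ forces $a\ge 1$ and $b\ge 1$, i.e. the heavy layers $V_a$ and $V_{a+1}$ lie strictly between $V_0$ and $V_d$. Assume, for contradiction, that $|s-t|\ge 2$; we may take $t\ge s+2$ (the case $s\ge t+2$ being symmetric, with the roles of $V_{a-1}$ and $V_{a+2}$ interchanged). By Lemma 3.2 consecutive layers induce complete bipartite graphs and every layer other than $V_a,V_{a+1}$ is a singleton, so every vertex of $V_a$ has degree $t+1$, every vertex of $V_{a+1}$ has degree $s+1$, the unique vertex $w\in V_{a-1}$ has degree $s+1$, and the unique vertex $z\in V_{a+2}$ has degree $t+1$.

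I would then apply the same local surgery as in Lemma 3.6: choose $u\in V_{a+1}$, delete all edges at $u$, and join $u$ to $w$ and to the other $t-1$ vertices of $V_{a+1}$. The result is exactly $G'=G[a\cdot 1,\,s+1,\,t-1,\,b\cdot 1]$; indeed $V_{a-1}\cup(V_a\cup\{u\})$ and $(V_a\cup\{u\})\cup(V_{a+1}\setminus\{u\})$ still induce complete bipartite graphs, $G'$ is bipartite with respect to the same layering, and since $t\ge 2$ the layer $V_{a+1}\setminus\{u\}$ is nonempty, so the diametral path $V_0,\dots,V_d$ survives and no distance exceeds $d$; thus $G'\in\mathcal{B}(n,d)$. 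Exactly five blocks of degrees change: the $s$ vertices of $V_a$ go from $t+1$ to $t$; the vertex $u$ goes from $s+1$ to $t$; the $t-1$ vertices of $V_{a+1}\setminus\{u\}$ go from $s+1$ to $s+2$; $w$ goes from $s+1$ to $s+2$; and $z$ goes from $t+1$ to $t$.

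Writing $\prod_2(H)=\prod_{v\in V(H)}d_H(v)^{d_H(v)}$ and cancelling the unchanged vertices, the surviving factors telescope: the numerator collapses to $(s+1)^{(s+1)(t+1)}(t+1)^{(s+1)(t+1)}=\bigl[(s+1)(t+1)\bigr]^{(s+1)(t+1)}$ and the denominator to $(s+2)^{t(s+2)}\,t^{t(s+2)}=\bigl[t(s+2)\bigr]^{t(s+2)}$, whence
\[
\frac{\prod_2(G)}{\prod_2(G')}=\frac{N^{N}}{M^{M}},\qquad N=(s+1)(t+1),\quad M=t(s+2).
\]
Since $M-N=t(s+2)-(s+1)(t+1)=t-s-1\ge 1$, we have $1<N<M$; as $x\mapsto x^{x}$ is increasing on $[1,\infty)$ — equivalently, writing $M=N+m$ with $m=M-N>0$ one gets $N^{N}/M^{M}=g(N)<1$, cf. Proposition 2.2 — it follows that $\prod_2(G')>\prod_2(G)$, contradicting the maximality of $G$. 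Hence $|s-t|\le 1$.

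The one point that requires care — and which I expect to be the only real obstacle — is the boundary configurations: when $a=1$ the vertex $w$ is the root, with degree $|V_1|=s$ instead of $s+1$, and when $a+2=d$ (equivalently $d=3$, $a=b=1$) the vertex $z$ is the antipodal vertex, with degree $|V_{d-1}|=t$ instead of $t+1$. In either case one or two of the five changing blocks is shifted by one; the quotient $\prod_2(G)/\prod_2(G')$ is then no longer the single clean ratio above but is still a quotient of products of terms of the form $p^{p}$, and a short direct estimate (using the monotonicity in Proposition 2.2 together with the convexity of $x\ln x$, and checked on sample values such as $s=1,t=3$ or $s=2,t=4$) shows it remains strictly below $1$ as soon as $t\ge s+2$. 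With these finitely many minor cases disposed of, the proof is complete.
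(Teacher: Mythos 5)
Your argument coincides with the paper's own proof of this lemma: the same surgery of detaching a vertex $u\in V_{a+1}$ and reattaching it to $V_{a-1}\cup(V_{a+1}\setminus\{u\})$, the same telescoping of the ratio to $N^{N}/M^{M}$ with $N=(s+1)(t+1)$, $M=t(s+2)$, $M-N=t-s-1\ge 1$, and the same appeal to the monotonicity underlying Proposition 2.2. Your closing remark about the boundary configurations $a=1$ or $a+2=d$ correctly flags a detail the paper passes over in silence (its displayed ratio is literally valid only when both $V_{a-2}$ and $V_{a+3}$ exist, i.e.\ essentially for $d\ge 5$), and your sketch for patching those finitely many cases is reasonable, so if anything your version is slightly more careful than the original.
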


\begin{proof}   If $d=2$, then the proof is straightforward.  Now we suppose $d \geq  3$. According to the construction of the partition sets of G and by Lemma 2.1, we have $a \geq 1, b \geq 1$. Suppose $|s - t| \geq 2$; we assume without loss of generality that $t>s$, and then $t-s \geq 2$. Because $s = |V_a|$,  $t = |V_{a+1}|$, we have $d_a=t +1$,  $d_{a+1} = s+1$. Similarly, we have $d_{a-2}$,  $d_{a+3} \in \{ 1, 2\}$. Thus $|d_{a-2}-d_{a+3}| \leq  2$.

Choose a vertex $u \in  V_{a+1}$ and let $G'$  be the graph obtained by deleting all edges incident to u and joining $u$ to each vertex
 in $(V_{a-1}\cup  V_{a+1})-\{u\}$. Clearly $G' \in $  $\mathcal{B}(n, d)$ and
  \begin{eqnarray}
 \nonumber  \frac{ \prod_2({G} )}{\prod_2({G'} )} &=&\frac{(s+1)^{(s+1)}\cdot (t+1)^{s(t+1)} \cdot (s+1)^{t(s+1)}\cdot (t+1)^{t+1}}{(s+2)^{s+2}\cdot   t^{t(s+1)}\cdot (s+2)^{(s+2)(t-1)}\cdot  t^{t}}
\\  \nonumber &=&\frac{((s+1)(t+1))^{(s+1)(t+1)}}{(t(s+2)])^{t(s+2)}}
  \\  \nonumber&&     (\text{Let} \; t=s+2+p )
  \\  \nonumber& =& \frac{(st+2s+p+3)^{(st+2s+p+3)}}{(st+2s+2p+4)^{(st+2s+2p+4)}}
\\&<&1,
\end{eqnarray}
which is a contradiction. This completes the proof of Lemma 3.7.
\end{proof}

By the above lemmas and   routine calculations, one can
derive that

\begin{thom}
Let $G \in  \mathcal{B}(n, d)$ with the maximal $\prod_1$-value or $\prod_2$-value. Then $G\cong G[a\cdot 1, \lfloor \frac{n-d+1}{2} \rfloor $, $\lceil   \frac{n-d+1}{2} \rceil, b\cdot 1]$.  Furthermore $a, b$ satisfy the following conditions with respect to the diameter $d$ of $G$.

(i)     If $d =2$,  then $a =0,  b=1$;

(ii)    If $d=3$, then $a=1, b=1$;

(iii)   If $d=4$,  then $a=1, b=2$;

(iv)    If $d=5$,  then $a=2, b=2$;

(v) If $d=6$, then $a=2, b=3$;

(vi)    If $d\geq 7$, then $a\geq 3, b\geq 3$.
\end{thom}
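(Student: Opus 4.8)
The plan is to use Lemmas 3.1--3.7 to cut the extremal graph down to a two-parameter family, and then to decide the two parameters by a finite comparison powered by Propositions 2.1 and 2.2. By Lemma 3.1 consecutive partition sets are completely joined; by Lemmas 3.2 and 3.3, $|V_d|=1$ (and for $d=2$ the graph is the balanced complete bipartite graph); by Lemmas 3.4 and 3.5 at most two partition sets have size $\geq 2$ and they are consecutive; and by Lemmas 3.6 and 3.7 their sizes $s,t$ satisfy $|s-t|\leq 1$. Hence any $G\in\mathcal B(n,d)$ of maximal $\prod_1$- or $\prod_2$-value equals $H_{a,b}:=G[a\cdot 1,\,s,\,t,\,b\cdot 1]$ with $a+b=d-1$, $s+t=n-d+1$, $\{s,t\}=\{\lfloor\frac{n-d+1}{2}\rfloor,\lceil\frac{n-d+1}{2}\rceil\}$ and, without loss of generality, $a\le b$. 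It then remains to decide which admissible pair $(a,b)$ maximises the index; this is exactly the content of (i)--(vi), so from here I would forget graphs and argue only with $(a,b)$.

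First I would read off the degree multiset of $H_{a,b}$. A vertex of a layer $L_i$ has degree $|L_{i-1}|+|L_{i+1}|$, and only $L_a$ (size $s$) and $L_{a+1}$ (size $t$) are large, so one obtains $s$ vertices of degree $t+1$ (of degree $t$ if $a=0$), $t$ vertices of degree $s+1$, and, from each singleton ``arm'', one endpoint of degree $1$, a run of degree-$2$ vertices, and one vertex of degree $s+1$ (respectively $t+1$) next to the large block, with the evident degeneracies when an arm has length $0$ or $1$. A short computation then shows that whenever $a\ge 2$ and $b\ge 2$ this multiset equals
\[
\{\,1^{2},\ 2^{\,d-5},\ (s+1)^{\,t+1},\ (t+1)^{\,s+1}\,\},
\]
which is independent of $(a,b)$. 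Thus all $H_{a,b}$ with $a,b\ge 2$ carry the same $\prod_1$- and $\prod_2$-value (this is the ``stable'' regime behind (iv)--(vi)), and everything reduces to finitely many comparisons: the boundary values $a\in\{0,1\}$ and, for small $d$, $b\in\{1,2\}$, together with the base diameters $d\in\{2,\dots,6\}$ treated by direct evaluation.

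For the comparisons I would pass from $H_{a,b}$ to $H_{a+1,b-1}$ (shifting the large block one step towards the centre): only the degrees in the two arms and adjacent to the block change, and the ratio $\prod_1(H_{a,b})/\prod_1(H_{a+1,b-1})$ (respectively its $\prod_2$-analogue) is a finite product of factors of the form $\frac{x}{x+1}$ (respectively $\frac{x^{x}}{(x+1)^{x+1}}$) with $x$ one of $1,2,s,t,s+1,t+1$. By Proposition 2.1 (respectively 2.2) each such factor is $<1$, $=1$ or $>1$ according to which of its two competing arguments is larger, so the sign of every comparison is decided. Carrying this out for $a=0\to 1$, $a=1\to 2$, $b=1\to 2$ and $b=2\to 3$, and adding the hand-checks for $d=2,3,4,5,6$, assembles the placements claimed in (i)--(vi).

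The conceptual input is light; the obstacle is the bookkeeping. The degree multiset is genuinely $(a,b)$-dependent precisely in the boundary cases $a\in\{0,1\}$ or $b\in\{1,2\}$, where an arm runs into the large block and the values $1,2,s,t,s+1,t+1$ get reshuffled in a way depending on the parity of $n-d+1$ and on which index is tracked; the main work is to verify that, in every such sub-case, the factors produced above combine with a consistent sign. The $\prod_1$-case is the delicate one, because there the competing products lie closest together, so the base cases $d\leq 6$ and the small values of $s$ (especially $s=1$) must be examined with particular care.
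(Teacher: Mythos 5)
Your reduction to the two-parameter family $H_{a,b}=G[a\cdot 1,s,t,b\cdot 1]$ via Lemmas 3.1--3.7, and your observation that the degree multiset equals $\{1^2,\,2^{d-5},\,(s+1)^{t+1},\,(t+1)^{s+1}\}$ whenever $a,b\ge 2$ (so that all such placements carry the same index values), are both correct, and already more explicit than the paper, whose entire argument for this theorem is the phrase ``by the above lemmas and routine calculations.'' The problem is that you never actually perform the boundary comparisons on which items (i)--(vi) rest, and your claim that Propositions 2.1 and 2.2 decide ``the sign of every comparison'' is not right: those propositions decide the sign of each individual factor of the ratio, but the ratios in question are products of factors pulling in \emph{opposite} directions, and nothing in your outline resolves which side wins.

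Concretely, compare $H_{1,b}$ with $H_{2,b-1}$ for $d\ge 5$. Their degree multisets differ only in that $H_{2,b-1}$ has an extra vertex of degree $1$ and an extra vertex of degree $s+1$ where $H_{1,b}$ has an extra vertex of degree $2$ and an extra vertex of degree $s$. Hence $\prod_1(H_{1,b})/\prod_1(H_{2,b-1})=4s^2/(s+1)^2$, which is $>1$ for every $s\ge 2$: the off-centre placement $a=1$ strictly beats every placement with $a\ge 2$ for the first index. For instance, with $n=8$, $d=5$, both $G[1\cdot 1,2,2,3\cdot 1]$ and $G[2\cdot 1,2,2,2\cdot 1]$ lie in $\mathcal{B}(8,5)$, yet $\prod_1(G[1\cdot 1,2,2,3\cdot 1])=16\cdot 9^5=944784$ while $\prod_1(G[2\cdot 1,2,2,2\cdot 1])=9^6=531441$. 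So carrying out your own comparison scheme does not ``assemble the placements claimed in (i)--(vi)''; for $\prod_1$ it contradicts (iv)--(vi) whenever $\lfloor\frac{n-d+1}{2}\rfloor\ge 2$. (The $\prod_2$ analogue of the same ratio is $4s^s/(s+1)^{s+1}<1$ for $s\ge 2$, so the centred placement does win for the second index; the two indices genuinely part ways here, which the statement glosses over.) The gap in your proposal is therefore not mere bookkeeping: the decisive computation is missing, and when supplied it shows that the $\prod_1$ half of the placement claim cannot be established by this route, because it is false as stated.
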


\section{  Ordering the extremal graphs according to their diameters }
  In this section, we investigate the relationship between $\prod_i(G[a\cdot 1,  \lfloor \frac{n-d+1}{2} \rfloor , \lceil   \frac{n-d+1}{2} \rceil, b\cdot 1])$ and $d$ for $i=1, 2$. As an application, we characterize the bipartite graphs with the largest, second-largest and smallest $\prod_1$-value (resp., $\prod_2$-values).

  \begin{thom} For $2 \leq d \leq n-1,  \prod_1(G[a\cdot 1,  \lfloor \frac{n-d+1}{2} \rfloor , \lceil   \frac{n-d+1}{2} \rceil, b\cdot 1])$ (resp.
  $\prod_2(G[a\cdot 1,  \lfloor \frac{n-d+1}{2} \rfloor , \lceil   \frac{n-d+1}{2} \rceil, b\cdot 1])$ is a decreasing function on $d$.
  \end{thom}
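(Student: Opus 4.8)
The plan is to compare the extremal graph for diameter $d$ with the one for diameter $d+1$ directly, by exhibiting a vertex-shifting operation that takes $G_d := G[a\cdot 1, \lfloor\frac{n-d+1}{2}\rfloor, \lceil\frac{n-d+1}{2}\rceil, b\cdot 1]$ to a graph of diameter $d+1$ whose $\prod_i$-value is at most $\prod_i(G_{d+1})$, while showing the value strictly drops. Equivalently, since by Theorem 3.1 the extremal graph has a completely determined structure (a path of $K_2$'s with one ``fat'' complete bipartite piece of parts of sizes $\lfloor\frac{n-d+1}{2}\rfloor$ and $\lceil\frac{n-d+1}{2}\rceil$ in the middle), I would just write $\prod_1(G_d)$ and $\prod_2(G_d)$ as explicit closed-form products in $n$ and $d$ and prove monotonicity in $d$ by examining the ratio $\prod_i(G_d)/\prod_i(G_{d+1})$.

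Concretely: for $\prod_1$, all vertices outside the two fat sets have degree $1$ or $2$, contributing a fixed power of $2$ (more precisely $2^{2(d-3)}$ worth of degree-$2$ vertices when $d\ge 3$, with small corrections at the ends), and each vertex of the two fat parts $V_a, V_{a+1}$ of sizes $p:=\lfloor\frac{n-d+1}{2}\rfloor$ and $q:=\lceil\frac{n-d+1}{2}\rceil$ has degree $q+1$ and $p+1$ respectively. So $\prod_1(G_d)$ is, up to the power-of-two factor, $(q+1)^{2p}(p+1)^{2q}$ times the end corrections. When $d$ increases by $1$, one of $p,q$ decreases by $1$ and one more degree-$2$ vertex is created on the pendant path; I would track exactly which quantities change and reduce the claim $\prod_1(G_d)>\prod_1(G_{d+1})$ to an inequality of the shape ``$(q+1)^{2p}(p+1)^{2q} > 4\cdot(\text{same with }p\text{ or }q\text{ decremented})$,'' which follows from Proposition 2.1 (monotonicity of $x/(x+m)$) exactly as in the proof of Lemma 3.6. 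The same scheme works for $\prod_2$, where the fat-part contribution is $(q+1)^{p(q+1)}(p+1)^{q(p+1)}$ and the decisive estimate is handled by Proposition 2.2 together with the fact that $x^x$ grows fast, mirroring Lemma 3.7.

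The main obstacle is bookkeeping rather than ideas: one must carefully handle the boundary regimes $d=2,3,4,5,6$ separately (Theorem 3.1 lists these cases because the values of $a$ and $b$, and hence the number of degree-$2$ vertices and the parity split between $p$ and $q$, behave irregularly there), and one must be careful about which of the two fat parts shrinks when passing from $d$ to $d+1$, since $\lfloor\cdot\rfloor$ and $\lceil\cdot\rceil$ swap roles with the parity of $n-d+1$. I would first dispose of the generic range $d\ge 7$ (where $a,b\ge 3$ and the structure is stable), establishing $\prod_i(G_d)/\prod_i(G_{d+1})>1$ via the ratio computation above, and then check the finitely many small-$d$ transitions ($d=2\to3$, $3\to4$, $4\to5$, $5\to6$, $6\to7$) by direct substitution into the closed forms, each reducing to an elementary numeric inequality in $n$. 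Combining all cases yields that $\prod_1$ and $\prod_2$ are strictly decreasing in $d$ on $2\le d\le n-1$.
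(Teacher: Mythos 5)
Your plan is essentially the paper's own proof: the authors likewise write $f(d)=\prod_1(G_d)=4^{d-5}(s+1)^{2t+2}(t+1)^{2s+2}$ with $s=\lfloor\frac{n-d+1}{2}\rfloor$, $t=\lceil\frac{n-d+1}{2}\rceil$, bound the ratio $f(d+1)/f(d)$ by splitting on the parity of $n-d$ (using Propositions 2.1 and 2.2), and dispose of the boundary cases $d\le 7$ (resp.\ $d\le 5$ for $\prod_2$) by direct computation. The approach and the case structure match; no substantive difference to report.
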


\begin{proof}    Let $G_d = G[a\cdot 1,  \lfloor \frac{n-d+1}{2} \rfloor , \lceil   \frac{n-d+1}{2} \rceil, b\cdot 1])$. Put $f(d)=\prod_1(G_d)$, $g(d)=\prod_2(G_d), d=2,3, \cdots , n-1$. In order to complete the proof  of this theorem, it suffices to prove the following claims.

\noindent{\bf   Claim 5.} $f(n-1) < f(n-2) < \cdots < f(8) < f(7)$ and  $g(n-1) <g(n-2) < \cdots < g(6) < g(5)$.

\noindent{\em Proof of Claim 5.}   Note that $d\leq n-1$.  Hence we have $\lceil   \frac{n-d}{2}  \rceil  \geq 1$.  For $d\geq 7$, $f(d)=4^{a+b-4}(s+1)^{2t+2}(t+1)^{2s+2}$, where $s=\lfloor \frac{n-d+1}{2} \rfloor $, $t=\lceil   \frac{n-d+1}{2} \rceil$, $a+b=d-1$,$s+t=n-d+1$.
Hence $f(d)=4^{d-5}(s+1)^{2t+2}(t+1)^{2s+2}$.
We have the following two cases:

\noindent {\bf \small  Case 1.} $n,d$ are even or $n,d$ are  odd. 

Then $n-d \geq 2$, and  an elementary calculation yields
$$f(d)=4^{d-5}(\frac{n-d+2}{2})^{n-d+4}(\frac{n-d+4}{2})^{n-d+2}.$$
Hence,
\begin{eqnarray}
 \nonumber  \frac{f(d+1)}{f(d)} &=&\frac{4^{d-4} (\frac{n-d+1}{2})^{n-d+3}(\frac{n-d+3}{2})^{n-d+1}}{ 4^{d-5} (\frac{n-d+2}{2})^{n-d+4}(\frac{n-d+4}{2})^{n-d+2}}
\\  \nonumber &= & 4(\frac{\frac{n-d+1}{2}}{\frac{n-d+2}{2}})^{n-d+3}(\frac{\frac{n-d+3}{2}}{\frac{n-d+4}{2}})^{n-d+1}(\frac{2}{n-d+2})(\frac{2}{n-d+4})
\\ \nonumber &< &\frac{16}{(n-d+2)(n-d+4)}\;\;\;\;\;(Since\;   n-d\geq 1)
\\&<&1.
\end{eqnarray}

\noindent {\bf \small  Case 2.}  $n$ is even and $d$ is odd or $n$  is odd and $d$ is even.

Then $n-d \geq 3$. Otherwise $n-d=1$, there is no graph of diameter $d+1$ with $n$ vertices.  An elementary calculation yields
$$f(d)=4^{d-5}(\frac{n-d+1}{2})^{2(n-d+3)}.$$
Hence,
\begin{eqnarray}
 \nonumber  \frac{f(d+1)}{f(d)}&=&\frac{4^{d-4} (\frac{n-d}{2})^{2(n-d+2)}}{ 4^{d-5} (\frac{n-d+1}{2})^{2(n-d+3)}}
\\  \nonumber &=& 4\left(\frac{\frac{n-d}{2}}{\frac{n-d+1}{2}}\right)^{2(n-d+2)}\left(\frac{4}{(n-d+1)^2}\right)
\\ \nonumber &<& \frac{16}{(n-d+1)^2} \;\;\;\;\;\;(Since \; n-d \geq 3 )
\\&<&1.
\end{eqnarray}

Similarly, we can also show that for $d\geq 5$, $\frac{g(d+1)}{g(d)}<1$.

This completes the proof of Claim 5.

\noindent {\bf \small  Claim 6.} $f(7)< f(6)<f(5) <f(4) <f(3)<f(2)$ and $g(5)<g(4)<g(3)<g(2)$.

\noindent{\em Proof of Claim 6.}  With a similar method we can also prove this  part by  direct computations.
The proof of Claim 6 is finished.

By Claims 5 and 6, we complete the proof of Theorem 4.1.
\end{proof}

The following corollary is a direct consequence of Lemma 3.8 and Theorem 4.1.

\begin{coro}  Among all bipartite graphs with order $n\geq 2$, $K_{\lfloor \frac{n}{2} \rfloor, \lceil  \frac{n}{2} \rceil}$
has the largest $\prod_1$-values and $\prod_2$-values, whereas $P_n$ has the smallest $\prod_1$-values and  $\prod_2$-values.
\end{coro}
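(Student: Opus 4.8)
The plan is to obtain Corollary 4.1 as a straightforward combination of Theorem 3.1, Lemma 3.8 (the $|s-t|\le 1$ lemmas, together with the fact that the extremal graph has the form $G[a\cdot 1, s, t, b\cdot 1]$) and the monotonicity statement of Theorem 4.1. The key observation is that every bipartite graph on $n\ge 2$ vertices has some diameter $d$ with $2\le d\le n-1$, so the maximum of $\prod_i$ over all bipartite graphs of order $n$ is the maximum over $d$ of the maximum over $\mathcal{B}(n,d)$.

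\textbf{Step 1: reduce to the extremal members of each $\mathcal{B}(n,d)$.} First I would invoke Theorem 3.1: for each fixed $d$ with $2\le d\le n-1$, the graph in $\mathcal{B}(n,d)$ maximizing $\prod_1$ (resp.\ $\prod_2$) is $G_d:=G[a\cdot 1,\lfloor\frac{n-d+1}{2}\rfloor,\lceil\frac{n-d+1}{2}\rceil,b\cdot 1]$ for the appropriate $a,b$. Hence
\[
\max_{G\in \mathcal{B}(n,d)}\prod_i(G)=\prod_i(G_d),\qquad i=1,2.
\]
Since every bipartite graph of order $n\ge 2$ lies in $\mathcal{B}(n,d)$ for exactly one $d\in\{1,2,\dots,n-1\}$ (and the case $d=1$ gives only $K_2$, which coincides with $G_2$ when $n=2$), the overall maximum of $\prod_i$ is $\max_{2\le d\le n-1}\prod_i(G_d)$.

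\textbf{Step 2: locate the maximizing and minimizing diameter.} Next I would apply Theorem 4.1, which says $f(d)=\prod_1(G_d)$ and $g(d)=\prod_2(G_d)$ are decreasing in $d$ on $\{2,\dots,n-1\}$. Therefore the overall maximum of $\prod_i$ is attained at $d=2$ and the overall minimum (among these extremal graphs, hence among all bipartite graphs by Step 1 applied to minima as well) is attained at $d=n-1$. It remains to identify $G_2$ and $G_{n-1}$ explicitly. For $d=2$, Theorem 3.1(i) gives $a=0,b=1$, so $G_2=G[0\cdot 1,\lfloor\frac n2\rfloor,\lceil\frac n2\rceil,1]$; but $V_0$ together with the part of size $\lceil\frac n2\rceil$ (or $\lceil\frac n2\rceil-1$ depending on parity, matching Lemma 3.2) forms one side and the other part forms the other side of a complete bipartite graph, so $G_2\cong K_{\lfloor n/2\rfloor,\lceil n/2\rceil}$. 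For $d=n-1$, we have $n-d+1=2$, so $s=t=1$ and $a+b=n-2$, i.e.\ $G_{n-1}$ is a path on $n$ vertices, $G_{n-1}\cong P_n$.

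\textbf{Step 3: assemble.} Combining Steps 1 and 2: $K_{\lfloor n/2\rfloor,\lceil n/2\rceil}$ maximizes both $\prod_1$ and $\prod_2$ over all bipartite graphs of order $n\ge 2$, and $P_n$ minimizes both. I expect the only mildly delicate point to be the identification $G_2\cong K_{\lfloor n/2\rfloor,\lceil n/2\rceil}$: one must check that the parity bookkeeping in Lemma 3.2 (where for even $n$ the parts are $\frac n2$ and $\frac n2-1$ before absorbing $V_0$) indeed reassembles into the balanced complete bipartite graph $K_{\lfloor n/2\rfloor,\lceil n/2\rceil}$, and that this agrees with the $\lfloor\cdot\rfloor,\lceil\cdot\rceil$ convention used in Theorem 3.1. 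Everything else is a direct citation of the already-established Theorem 3.1, Lemma 3.8 and Theorem 4.1, so no genuine obstacle remains.
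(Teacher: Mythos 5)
Your maximization argument is essentially the paper's intended route: the paper offers no proof beyond calling the corollary ``a direct consequence'' of the extremal characterization (Theorem 3.1) and the monotonicity in $d$ (Theorem 4.1), and your Steps 1--2 for the \emph{maximum} --- including the identifications $G_2\cong K_{\lfloor n/2\rfloor,\lceil n/2\rceil}$ and $G_{n-1}\cong P_n$ and the parity bookkeeping from Lemma 3.2 --- fill in exactly the right details and are correct.

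The gap is in your treatment of the minimum. Theorem 3.1 characterizes the graph \emph{maximizing} $\prod_i$ within each $\mathcal{B}(n,d)$; it says nothing about minimizers, so your parenthetical ``Step 1 applied to minima as well'' is not available. Knowing that the maxima $f(d)=\prod_i(G_d)$ decrease in $d$ only yields $\prod_i(P_n)=f(n-1)<\max_{G\in\mathcal{B}(n,d)}\prod_i(G)$ for each $d<n-1$; it does not compare $\prod_i(P_n)$ with an arbitrary graph of diameter $d$, and no lemma in the paper does. Indeed the minimality claim for $\prod_1$ fails outright: the star $K_{1,n-1}$ is bipartite with $\prod_1(K_{1,n-1})=(n-1)^2$, whereas $\prod_1(P_n)=4^{n-2}$, so for $n\ge 5$ the star has a strictly smaller $\prod_1$-value than the path. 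For $\prod_2$ the path is in fact minimal, but proving that needs a separate argument (e.g.\ each factor $d(u)^{d(u)}$ is nondecreasing under edge addition, so one reduces to spanning trees, among which $P_n$ is known to minimize $\prod_2$); none of this follows from Theorems 3.1 and 4.1. So the half of the corollary concerning $P_n$ cannot be obtained by the route you (or, apparently, the paper) propose.
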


\begin{thom} Among all bipartite graphs with order $n > 2$,  $K_{\lfloor \frac{n}{2} \rfloor, \lceil  \frac{n}{2} \rceil} -e$ has the second-largest $\prod_1$-values and $\prod_2$-values for odd $n$,  and $K_{\lfloor \frac{n}{2} \rfloor, \lceil  \frac{n}{2} \rceil} -e$ has the second-largest $\prod_1$-values for even $n$  and $K_{\lfloor \frac{n-2}{2} \rfloor, \lceil  \frac{n+2}{2} \rceil} $ has the second-largest $\prod_2$-values for even $n$.
\end{thom}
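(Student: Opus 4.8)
The plan is to reduce the statement to one comparison between two explicit candidates, and then to settle that comparison in four (index, parity) cases. By Corollary 4.1, $H^{\ast}:=K_{\lfloor n/2\rfloor,\lceil n/2\rceil}$ is the unique maximizer of $\prod_{i}$ ($i=1,2$) over all connected bipartite graphs of order $n$, and $\mathrm{diam}(H^{\ast})=2$. Let $G\not\cong H^{\ast}$ be a connected bipartite graph on $n>2$ vertices, and put $d=\mathrm{diam}(G)$; then $d\ge 2$, since $d=1$ forces $G=K_{2}$. If $d\ge 3$, Theorem 3.1 gives $\prod_{i}(G)\le\prod_{i}(G_{d})$ with $G_{d}=G[a\cdot 1,\lfloor\frac{n-d+1}{2}\rfloor,\lceil\frac{n-d+1}{2}\rceil,b\cdot 1]$, and Theorem 4.1 gives $\prod_{i}(G_{d})\le\prod_{i}(G_{3})$, with equality throughout only if $G\cong G_{3}$. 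If $d=2$, then in a bipartite graph of diameter $2$ any two vertices in different parts are at odd distance at most $2$, hence adjacent, so $G=K_{p,q}$ with $p+q=n$; a short unimodality estimate in the spirit of Propositions 2.1--2.2 shows that $\prod_{i}(K_{p,q})$ strictly decreases as the split $\{p,q\}$ becomes more unbalanced, whence $\prod_{i}(G)\le\prod_{i}(K_{\lfloor(n-2)/2\rfloor,\lceil(n+2)/2\rceil})$, with equality only for that graph. Hence the second-largest value of $\prod_{i}$ over all bipartite graphs of order $n$ equals $\max\{\prod_{i}(G_{3}),\ \prod_{i}(K_{\lfloor(n-2)/2\rfloor,\lceil(n+2)/2\rceil})\}$ (this maximum is below $\prod_i(H^\ast)$ since $H^\ast$ is the unique maximizer), so it remains to describe $G_{3}$ and to decide this maximum.

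For $G_{3}$, Theorem 3.1(ii) gives $G_{3}=G[1\cdot 1,\lfloor\frac{n-2}{2}\rfloor,\lceil\frac{n-2}{2}\rceil,1\cdot 1]$. I would check directly, by comparing degree sequences, that $G_{3}\cong K_{\lfloor n/2\rfloor,\lceil n/2\rceil}-e$ for every $n>2$ and both parities: the two middle classes of $G_{3}$ play the roles of the two (almost) complete bipartite parts, and the two singleton end-classes $V_{0},V_{d}$ correspond to the two endpoints of the deleted edge; in particular $K_{\lfloor n/2\rfloor,\lceil n/2\rceil}-e$ has diameter $3$. Writing $n=2m$ or $n=2m+1$, this yields closed formulas for $\prod_{1}(G_{3})$ and $\prod_{2}(G_{3})$ in $m$, to be matched against the equally explicit values of $\prod_{1},\prod_{2}$ at $K_{m-1,m+1}$ (for even $n$) and $K_{m-1,m+2}$ (for odd $n$).

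The final step is the four comparisons. For $\prod_{1}$, both parities, the ratio of $\prod_{1}(K_{\lfloor n/2\rfloor,\lceil n/2\rceil}-e)$ to $\prod_{1}(K_{\lfloor(n-2)/2\rfloor,\lceil(n+2)/2\rceil})$ collapses to a product of the form $\bigl(\frac{m}{m-1}\bigr)^{\alpha}\bigl(\frac{m+1}{m+2}\bigr)^{\beta}$ with $\alpha\ge\beta>0$, which exceeds $1$ by Proposition 2.1; hence $G_{3}=K_{\lfloor n/2\rfloor,\lceil n/2\rceil}-e$ is the runner-up for $\prod_{1}$. For $\prod_{2}$ with $n$ odd, the same style of estimate again makes $G_{3}=K_{\lfloor n/2\rfloor,\lceil n/2\rceil}-e$ win. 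The exceptional case is $\prod_{2}$ with $n$ even: after cancellation, comparing $\prod_{2}(K_{m-1,m+1})$ with $\prod_{2}(K_{m,m}-e)=\prod_{2}(G_{3})$ reduces to the single inequality $(m-1)^{m-1}(m+1)^{m+1}>m^{2m}$, i.e. $h(m-1)+h(m+1)>2h(m)$ for $h(x)=x\ln x$, which holds by strict convexity of $h$ (this is essentially the content of Proposition 2.2). Thus for $\prod_{2}$ with $n$ even the diameter-$2$ graph $K_{m-1,m+1}=K_{\lfloor(n-2)/2\rfloor,\lceil(n+2)/2\rceil}$ is the runner-up. All inequalities invoked are strict and the unimodality steps are strict, so the runner-up is unique in each case, which is exactly the claim.

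I expect the main obstacle to be precisely this last case: one must notice that the naive guess ``$H^{\ast}$ minus an edge'' fails for $\prod_{2}$ when $n$ is even, where instead the slightly unbalanced complete bipartite graph overtakes $H^{\ast}-e$, and confirming this overtaking needs the convexity of $x\ln x$ rather than the elementary Proposition 2.1 that suffices for the other three cases. A secondary, but indispensable, point is the observation that a diameter-$2$ bipartite graph is complete bipartite, which is what keeps $H^{\ast}-e$ out of $\mathcal{B}(n,2)$ and forces it to surface only through the diameter-$3$ branch.
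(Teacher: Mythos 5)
Your proposal is correct and follows essentially the same route as the paper: Theorem 4.1 reduces the problem to comparing the runner-up complete bipartite graph $K_{\lfloor \frac{n-2}{2}\rfloor,\lceil \frac{n+2}{2}\rceil}$ with the best diameter-$3$ graph $G[1,\lfloor\frac{n-2}{2}\rfloor,\lceil\frac{n-2}{2}\rceil,1]\cong K_{\lfloor \frac{n}{2}\rfloor,\lceil \frac{n}{2}\rceil}-e$, and the four parity/index cases are settled by the same two propositions you invoke (Proposition 2.1 for three of them, the convexity behind Proposition 2.2 for $\prod_2$ with $n$ even). If anything, your reduction of the even-$n$, $\prod_2$ case to $(m-1)^{m-1}(m+1)^{m+1}>m^{2m}$ is cleaner than the paper's displayed computation, which substitutes $\prod_2(K_{m,m})$ for $\prod_2(K_{m,m}-e)$ yet reaches the same (correct) conclusion.
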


\begin{proof}
 Note that $\prod_1(K_{s, t})=t^{2s}s^{2t}$ and $\prod_2(K_{s,t})=(st)^{st}$.   By Theorem 4.1, we only need to compare the value of the second multiplicative Zagreb indices of  $K_{\lfloor \frac{n-2}{2} \rfloor, \lceil  \frac{n+2}{2} \rceil}$ and  $G[1, \lfloor \frac{n-2}{2} \rfloor, \lceil  \frac{n-2}{2} \rceil , 1 ]$. Note that $G[1, \lfloor \frac{n-2}{2} \rfloor, \lceil  \frac{n-2}{2} \rceil , 1 ]$
$\cong $ $K_{\lfloor \frac{n}{2} \rfloor, \lceil  \frac{n}{2} \rceil} -e$.

By direct computations, we have that
$$\prod_1( K_{\lfloor \frac{n-2}{2} \rfloor, \lceil  \frac{n+2}{2}
\rceil})=t^{2s}s^{2t}=(\lceil \frac{n+2}{2} \rceil
)^{2(\lfloor \frac{n-2}{2} \rfloor )} \cdot (\lfloor \frac{n-2}{2}
\rfloor)^{2(\lceil  \frac{n+2}{2} \rceil)}$$
and
$$\prod_2(K_{\lfloor \frac{n-2}{2} \rfloor, \lceil  \frac{n+2}{2}
\rceil}) =(ts)^{ts}=(\lfloor \frac{n-2}{2} \rfloor \cdot \lceil
\frac{n+2}{2} \rceil )^{\lfloor \frac{n-2}{2} \rfloor \cdot \lceil
\frac{n+2}{2} \rceil }.$$

Since $G[1, \lfloor \frac{n-2}{2} \rfloor, \lceil  \frac{n-2}{2} \rceil , 1 ]$
$\cong $ $K_{\lfloor \frac{n}{2} \rfloor, \lceil  \frac{n}{2} \rceil} -e$,
we have that
$$\prod_1(K_{\lfloor \frac{n}{2} \rfloor, \lceil  \frac{n}{2} \rceil}-e)
 =(\lfloor \frac{n-2}{2} \rfloor +1)^{2(\lceil  \frac{n-2}{2} \rceil  +1)}\cdot (\lceil  \frac{n-2}{2} \rceil  +1)^{2(\lfloor \frac{n-2}{2} \rfloor +1)}$$
and
$$\prod_2(K_{\lfloor \frac{n}{2} \rfloor, \lceil  \frac{n}{2} \rceil}-e)=
  [(\lfloor \frac{n-2}{2} \rfloor +1)(\lceil  \frac{n-2}{2} \rceil  +1)]^{(\lfloor \frac{n-2}{2} \rfloor +1)(\lceil  \frac{n-2}{2} \rceil  +1)}.$$

If $n$ is even, by direct calculations, we obtain that

$$\prod_1( K_{\lfloor \frac{n-2}{2} \rfloor, \lceil  \frac{n+2}{2} \rceil})=(\frac{n}{2}+1)^{(n-2)}\cdot (\frac{n}{2}-1)^{(n+2)}, $$
$$\prod_2(K_{\lfloor \frac{n-2}{2} \rfloor, \lceil  \frac{n+2}{2} \rceil})=(\frac{n-2}{2}\cdot \frac{n+2}{2})^{(\frac{n-2}{2}\cdot \frac{n+2}{2})},$$
$$\prod_1(K_{\lfloor \frac{n}{2} \rfloor, \lceil  \frac{n}{2} \rceil}-e)=(\frac{n}{2})^{n}\cdot (\frac{n}{2})^{n}$$
and
$$\prod_2(K_{\lfloor \frac{n}{2} \rfloor, \lceil  \frac{n}{2} \rceil}-e)=[\frac{n}{2}\cdot \frac{n}{2}]^{(\frac{n}{2}\cdot \frac{n}{2})}.$$

By direct calculations, we obtain

$$\frac{\prod_1( K_{\lfloor \frac{n-2}{2} \rfloor, \lceil  \frac{n+2}{2} \rceil})}{\prod_1(K_{\lfloor \frac{n}{2} \rfloor, \lceil  \frac{n}{2} \rceil}-e)} =\left(\frac{\frac{n}{2}-1}{\frac{n}{2}+1}\right)^2 \times \left(\frac{\left(\frac{\frac{n}{2}-1}{\frac{n}{2}}\right)}{\left(\frac{\frac{n}{2}}{\frac{n}{2}+1}\right)}\right)^n <1 \;\;\; (by ~Proposition ~2.1)$$

and
\begin{eqnarray}
 \nonumber  \frac{\prod_2( K_{\lfloor \frac{n-2}{2} \rfloor, \lceil  \frac{n+2}{2} \rceil})}{\prod_2(K_{\lfloor \frac{n}{2} \rfloor, \lceil  \frac{n}{2} \rceil}-e)} &=&\frac{((\frac{n}{2}+1)(\frac{n}{2}-1))^{(\frac{n}{2}+1)(n/2-1)}}{((\frac{n}{2})(\frac{n}{2}))^{(\frac{n}{2})(\frac{n}{2})}}
\\  \nonumber &=& \left(\frac{\left(\frac{(\frac{n}{2}-1)^{(\frac{n}{2}-1)}}{(\frac{n}{2})^{(\frac{n}{2})}}\right)}{\left(\frac{(\frac{n}{2})^{(\frac{n}{2})}}
{(\frac{n}{2}+1)^{(\frac{n}{2}+1)}}\right)}\right)^{(\frac{n}{2})} \times \frac{(\frac{n}{2}-1)^{(\frac{n}{2}-1)}}{(\frac{n}{2}+1)^{(\frac{n}{2}+1)}}
\\ \nonumber & >&1 \;\;\;\;\;(by \; Proposition \; 2.2 ).
\end{eqnarray}

If $n$ is odd, by direct calculations, we get that
$$\prod_1( K_{\lfloor \frac{n-2}{2} \rfloor, \lceil  \frac{n+2}{2} \rceil})=(\frac{n+3}{2})^{(n-3)}\cdot (\frac{n-3}{2})^{(n+3)}, $$
$$\prod_2(K_{\lfloor \frac{n-2}{2} \rfloor, \lceil  \frac{n+2}{2} \rceil})=(\frac{n-3}{2}\cdot \frac{n+3}{2})^{(\frac{n-3}{2}\cdot \frac{n+3}{2})},$$
$$\prod_1(K_{\lfloor \frac{n}{2} \rfloor, \lceil  \frac{n}{2} \rceil}-e)=(\frac{n-1}{2})^{n+1}\cdot (\frac{n+1}{2})^{(n-1)}$$
and
$$\prod_2(K_{\lfloor \frac{n}{2} \rfloor, \lceil  \frac{n}{2} \rceil}-e)=(\frac{n-1}{2}\cdot \frac{n+1}{2})^{(\frac{n-1}{2}\cdot \frac{n+1}{2})}.$$

By direct calculations, we obtain that

$$\frac{\prod_1( K_{\lfloor \frac{n-2}{2} \rfloor, \lceil
\frac{n+2}{2} \rceil})}{\prod_1(K_{\lfloor \frac{n}{2} \rfloor,
\lceil  \frac{n}{2} \rceil}-e)} = \left(\frac{\left(\frac{\frac{n-3}{2}}{\frac{n-1}{2}}\right)}{\left(\frac{\frac{n+1}{2}}{\frac{n+3}{2}}\right)}\right)^{n-3}
 \times \left(\frac{\frac{n-3}{2}}{\frac{n+1}{2}}\right)^2 \times \left(\frac{\frac{n-3}{2}}{\frac{n-1}{2}}\right)^4   <1 ~~(by~Proposition \;\;2.1)$$
 
and

$$\frac{\prod_2(
K_{\lfloor \frac{n-2}{2} \rfloor, \lceil  \frac{n+2}{2}
\rceil})}{\prod_2(K_{\lfloor \frac{n}{2} \rfloor, \lceil
\frac{n}{2} \rceil}-e)} =\frac{(\frac{n^2-9}{4})^{ \frac{n^2-9}{4}} }{(\frac{n^2-1}{4})^{ \frac{n^2-1}{4}} }<1.$$

 This completes the proof of Theorem 4.3.
\end{proof}

\vskip4mm\noindent{\bf Acknowledgements.}


 The work was partially supported by the National Natural Science Foundation of China under Grants 11371162, 11571134, 11601006,
 and the Self-determined Research Funds of CCNU from the colleges basic research and operation of MOE, and the Natural Science Foundation of Anhui Province of China
under Grant no. KJ2013B105, the Natural Science Foundation for the
Higher Education Institutions of Anhui Province of China under
Grant no. KJ2015A331. Furthermore, the authors are grateful to the anonymous referee for a careful checking of the details and for helpful comments that improved this paper.


\begin{thebibliography}{99}

\bibitem{Gutman1996}
\textcolor{blue}{ S. Klav$\breve{z}$ar, I. Gutman, The Szeged and
the Wiener index of graphs, Appl. Math. Lett. 9 (1996), 45-49.}


\bibitem{Gutman1972}
\textcolor{blue}{
 I. Gutman, N. Trinajsti\'c, Graph theory and molecular
orbitals. Total ��-electron energy of alternant hydrocarbons,
Chem. Phys. Lett. 17 (1972) 535-538.}





\bibitem{L2016}
\textcolor{blue}{J. B. Liu, X. F. Pan, L. Yu, D. Li, Complete
characterization of bicyclic graphs with minimal Kirchhoff index,
Discrete Appl. Math. 200 (2016) 95-107.}

\bibitem{L2015}
\textcolor{blue}{ J. B. Liu, W. R. Wang, Y. M. Zhang, X. F. Pan,
On degree resistance distance of cacti, Discrete Appl. Math. 203
(2016) 217-225.}

\bibitem{0101}
\textcolor{blue}{ J. B. Liu, S. Wang, C. Wang, S. Hayat, Further results on computation of topological
indices of certain networks,  {IET Control Theory Appl., (DOI:  10.1049/iet-cta.2016.1237).}}


\bibitem{0102} \textcolor{blue}{W. Gao, M.  Jamil, A. Javed, M. Farahani,  S. Wang, J.-B. Liu,  { Sharp bounds of the hyper Zagreb index on acyclic, unicylic and bicyclic graphs, { Discrete Dyn. Nat. Soc., vol. 2017, Article ID 6079450, 5 pages, 2017. }}}

\bibitem{0103} \textcolor{blue}{Y. Zhai, J.-B. Liu, S. Wang, {Structure properties of Koch networks based on networks dynamical systems,  {Complexity,} vol. 2017, Article ID 6210878, 7 pages, 2017. }}

 \bibitem{Li2008}
\textcolor{blue}{ X. Li, Y. Shi, A survey on the randi��c index,
MATCH Commun. Math. Comput. Chem 59 (2008) 127-156.}

\bibitem{shi2015}
\textcolor{blue}{ Y. Shi, Note on two generalizations of the
randi��c index, Appl. Math. Comput. 265 (2015) 1019-1025.}

\bibitem{BF2014}
\textcolor{blue}{ B. Furtula, I. Gutman, S. Ediz, On difference of
Zagreb indices, Discr. Appl. Math 178 (2014) 83-88.}

\bibitem{SM2014}
\textcolor{blue}{ S.M. Hosamani, I. Gutman, Zagreb indices of
transformation graphs and total transformation graphs, Appl. Math.
Comput 247 (2014) 1156-1160.}



\bibitem{WangJ2015}
\textcolor{blue}{J.F. Wang, F. Belardo, A lower bound for the
first Zagreb index and its application, MATCH Commun. Math.
Comput. Chem 74 (2015) 35-56.}




\bibitem{Gutman2014}
\textcolor{blue}{I. Gutman, On the origin of two degree�Cbased
topological indices, Bull. Acad. Serbe Sci. Arts. (Cl. Sci. Math.)
146 (2014) 39-52.}


\bibitem{LiuPX2015}
\textcolor{blue}{W. Gao, M. Farahani,  M. Husin, S. Wang, On the edge-version atom-bond connectivity and geometric arithmetic indices of certain graph operations, Appl. Math. Comput, 308 (2017) 11-17.}


\bibitem{RT20102}
\textcolor{blue}{ R. Todeschini, V. Consonni, New local vertex
invariants and molecular descriptors based on functions of the
vertex degrees, MATCH Commun. Math. Comput. Chem. 64 (2010)
359-372.}


\bibitem{Xu20102}
\textcolor{blue}{ K. Xu, H. Hua, A unified approach to extremal
multiplicative Zagreb indices for trees, unicyclic and bicyclic
graphs, MATCH Commun. Math. Comput. Chem. 68 (2012) 241-256.}

\bibitem{Iranmanesh20102}
\textcolor{blue}{ A. Iranmanesh, M.A. Hosseinzadeh, I. Gutman, On
multiplicative Zagreb indices of graphs, Iranian J. Math. Chem. 3
(2) (2012) 145-154.}

\bibitem{Liuz20102}
\textcolor{blue}{ J. Liu, Q. Zhang, Sharp upper bounds for
multiplicative Zagreb indices, MATCH Commun. Math. Comput. Chem.
68 (2012) 231-240.}

\bibitem{Wang2015}
\textcolor{blue}{S. Wang, B. Wei, Multiplicative Zagreb indices of
$k$-trees, Discrete Appl. Math. 180 (2015) 168-175.}


\bibitem{Ramin2016}
\textcolor{blue}{R. Kazemi, Note on the multiplicative Zagreb
indices, Discrete Appl. Math. 198 (2016) 147-154.}



\bibitem{Borov2016}
\textcolor{blue}{ B. Borovi\'canin., B. Furtula, On extremal
Zagreb indices of trees with given domination number, Appl. Math.
Comput. 279 (2016) 208-218.}
\bibitem{Bojana2015}
\textcolor{blue}{  B. Borovi$\acute{c}$anin , T.A.  Lampert , On the Maximum and Minimum
Zagreb Indices of Trees with a Given
Number of Vertices of Maximum Degree. MATCH Commun. Math. Comput. Chem. 74 (2015) 81-96}

\bibitem{Zhai2009}
\textcolor{blue}{M. Zhai, R. Liu, J. Shu, On the spectral radius of bipartite graphs with given diameter, Linear Algebra Appl. 430 (2009) 1165�C1170.   }


\bibitem{d01}
\textcolor{blue}{Y.L. Guo, Y. Du, Y. Wang, Bipartite graphs with extreme values of the first general Zagreb index, MATCH Commun. Math. Comput. Chem. 63 (2010)
469-480.   }

\bibitem{d02}
\textcolor{blue}{T.C.E. Cheng, et al., Extremal values of the sum of squares of degrees of bipartite graphs, Discrete Math. 309 (2009) 1557-1564.   }

\bibitem{d03}
\textcolor{blue}{Y.L. Guo, Research on the Zagreb indices of bipartite graphs, Master Thesis, Northwestern Poly. Univ., 2007.   }

\bibitem{d04}
\textcolor{blue}{L.H. Feng, A. Ilic, W.J. Liu, Zagreb indices of graphs with given diameter (submitted for publication).}


\bibitem{d05}
\textcolor{blue}{S. Li, M. Zhang, Sharp upper bounds for Zagreb indices of bipartite graphs with a given
diameter, Appl. Math. Lett. 24 (2011) 131-137}




\end{thebibliography}
\end{document}